\documentclass[a4paper]{amsart}

\pdfoutput=1

\usepackage{stackrel}
\usepackage{amsmath}
\usepackage{amssymb}
\usepackage{amsthm}
\usepackage{mathtools}
\usepackage{commath}

\usepackage{booktabs}
\usepackage{microtype}
\usepackage{lmodern}
\usepackage[T1]{fontenc}
\usepackage[all]{xy}
\usepackage{dsfont}
\usepackage{xspace}
\usepackage{chngcntr}
\usepackage[english]{babel}
\usepackage[babel,english=american]{csquotes}
\usepackage{multirow}
\usepackage{enumerate}
\usepackage{mathrsfs}
\usepackage{stmaryrd}
\usepackage{tikz,pgffor}
\usetikzlibrary{arrows}

\usepackage{enumitem}
\usepackage{mathrsfs}

\usepackage{booktabs,multirow}
\usepackage{url}

\setcounter{tocdepth}{1}

\DeclareRobustCommand{\SkipTocEntry}[4]{}

\newtheorem{theorem}{Theorem}[section]

\newtheorem{lemma}[theorem]{Lemma}
\newtheorem{prop}[theorem]{Proposition}

\newtheorem{cor}[theorem]{Corollary}
\theoremstyle{definition}
\newtheorem{definition}[theorem]{Definition}
\newtheorem{remark}[theorem]{Remark}
\newtheorem{example}[theorem]{Example}

\numberwithin{equation}{section}

\newcommand{\minus}{-}

\DeclareMathOperator*{\Lie}{Lie}
\DeclareMathOperator*{\id}{id}

\DeclareMathOperator*{\Supp}{supp}
\DeclareMathOperator{\Div}{div}
\DeclareMathOperator{\rank}{rank}
\DeclareMathOperator{\conv}{conv}
\DeclareMathOperator{\II}{II}
\DeclareMathOperator{\III}{III}
\DeclareMathOperator{\IV}{IV}

\newcommand{\st}{\mathrm{st}}
\newcommand{\clst}{\overline{\mathrm{st}}}

\newcommand{\PGL}{\mathrm{PGL}}
\newcommand{\Spin}{\mathrm{Spin}}
\newcommand{\SL}{\mathrm{SL}}
\newcommand{\SO}{\mathrm{SO}}
\newcommand{\Cl}{\mathrm{Cl}}
\newcommand{\Pic}{\mathrm{Pic}}
\newcommand{\Sym}{\mathrm{Sym}}
\newcommand{\cone}{\mathrm{cone}}

\newcommand{\SHom}{\mathcal{H}om}
\newcommand{\Hom}{\mathrm{Hom}}

\newcommand{\relint}{\mathrm{relint}}
\newcommand{\topint}{\mathrm{int}}
\newcommand{\ohne}{\smallsetminus}

\newcommand{\Der}{\mathrm{Der}}

\newcommand{\widecheck}{\check}

\newcommand{\rleft}{\mathopen{}\mathclose\bgroup\left}
\newcommand{\rright}{\aftergroup\egroup\right}

\newcommand{\Ha}{\mathrm{\Gamma}}

\newcommand{\C}{\mathds{C}}
\newcommand{\Q}{\mathds{Q}}

\newcommand{\V}{\mathds{V}}
\newcommand{\Z}{\mathds{Z}}

\newcommand{\G}{\mathds{G}}
\newcommand{\Pb}{\mathds{P}}

\newcommand{\Om}{\mathcal{O}}

\newcommand{\Vm}{\mathcal{V}}
\newcommand{\Cm}{\mathcal{C}}
\newcommand{\Fm}{\mathcal{F}}
\newcommand{\Dm}{\mathcal{D}}

\newcommand{\Tm}{\mathscr{T}}

\newcommand{\Nm}{\mathcal{N}}
\newcommand{\Mm}{\mathcal{M}}

\newcommand{\Ff}{\mathfrak{F}}

\newcommand{\X}{\mathfrak{X}}

\newcommand{\F}{\mathfrak{F}}

\newcommand{\gf}{\mathfrak{g}}
\newcommand{\uf}{\mathfrak{u}}
\newcommand{\lf}{\mathfrak{l}}

\newcommand{\Ts}{\mathscr{T}}

\newcommand{\ie}{i.\,e.~}
\newcommand{\eg}{e.\,g.~}

\begin{document}
\selectlanguage{english}

\title{Gorenstein spherical Fano varieties}

\author{Giuliano Gagliardi}
\address{Fachbereich Mathematik, Universit\"at T\"ubingen, Auf der
Morgenstelle 10, 72076 T\"ubingen, Germany}
\curraddr{}
\email{giuliano.gagliardi@uni-tuebingen.de}
\thanks{}

\author{Johannes Hofscheier}
\address{Fachbereich Mathematik, Universit\"at T\"ubingen, Auf der
Morgenstelle 10, 72076 T\"ubingen, Germany}
\curraddr{}
\email{johannes.hofscheier@uni-tuebingen.de}
\thanks{}

\subjclass[2010]{Primary 14M27; Secondary 14J45, 14L30, 52B20}

\begin{abstract}
We obtain a combinatorial description of
Gorenstein spherical Fano varieties in terms of certain
polytopes, generalizing the combinatorial description of
Gorenstein toric Fano varieties by reflexive polytopes and its
extension to Gorenstein horospherical Fano varieties due to
Pasquier. Using this description,
we show that the rank of the Picard group of an arbitrary
$d$-dimensional $\Q$-factorial Gorenstein spherical Fano variety is
bounded by $2d$. This paper also contains an overview of the
description of the natural representative of the 
anticanonical divisor class of a spherical variety due to Brion.
\end{abstract}

\maketitle

\section{Introduction}
\label{sec:intr}

A complete complex algebraic variety is called Gorenstein Fano if it
is normal and its anticanonical divisor is Cartier and ample. For
toric varieties one has a nice characterization of Gorenstein Fano
varieties by convex geometry, namely one has a bijective
correspondence between Gorenstein toric Fano varieties and reflexive
polytopes, \ie lattice polytopes whose dual is a lattice polytope as
well (see \cite[Theorem 4.1.9]{Bat:DualPolyhedra}).  Generalizing the
notion of a reflexive polytope, Pasquier established a similar
correspondence for horospherical varieties in
\cite{Pasquier:FanoHorospherical}. In this paper, we extend this
correspondence to (arbitrary) spherical varieties.

In \cite{Brion:cc}, Brion determined a natural representative of the
anticanonical divisor class for any spherical variety $X$, on which
our combinatorial description of Gorenstein spherical Fano varieties
essentially depends. For this reason, we have found it useful to give
a self-contained overview of Brion's as well as some related results
of Luna, together with some motivation.
The new results of this paper, \ie
Theorem~\ref{thm:bijection-Fano-poly} and Theorem~\ref{theorem:le-2d},
are stated afterwards.

Let $G$ be a connected reductive complex algebraic group and $B
\subseteq G$ a Borel subgroup.  A closed subgroup $H \subseteq G$ is
called \emph{spherical} provided that the homogeneous space $G/H$ contains
an open $B$-orbit. Let $H \subseteq G$ be a spherical subgroup. A
$G$-equivariant open embedding $G/H \hookrightarrow X$ into a normal
irreducible $G$-variety $X$ is called a \emph{spherical embedding}, and
$X$ is called a \emph{spherical variety}.

An anticanonical divisor is a Weil divisor $-K_X$ of $X$ such that
$\Om_X(-K_X) = \check{\omega}_X$ where $\check{\omega}_X$ is the
anticanonical sheaf of $X$, \ie
the reflexive sheaf coinciding with the top exterior power of
the tangent sheaf on the smooth locus of $X$. 
It is equipped with a natural $G$-linearization
(see Section~\ref{sec:CoTangentSheafGVariety}).

It is known that the open $B$-orbit $U$ is isomorphic to
$(\C^*)^r\times\C^s$ (see~\cite[Theorem~5]{ros63}) and hence has
trivial divisor class group.  In particular, the invertible sheaf
$\widecheck\omega_{G/H}$ is trivial on $U$, and we might ask whether
there is a natural choice of a generator $s \in \Ha(U,
\widecheck\omega_{G/H})$.

We first recall the well-known case where $G = B = T$ is a torus and
$H$ is trivial (see, for instance, \cite[Section~4.3]{fulttor}).  A
spherical embedding $G/H \hookrightarrow X$ is then simply a toric
variety $X$ with embedded torus $U = G/H \cong T$, and it is possible
to show that there is a unique generator $s \in \Ha(U,
\widecheck\omega_{G/H})$ which is $T$-invariant. This generator can be
explicitly written as
\begin{align*}
  s = x_1 \frac{\partial}{\partial x_1} \wedge \ldots \wedge x_n
  \frac{\partial}{\partial x_n}
\end{align*}
where $x_1,\ldots,x_n$ is a choice of coordinates for the torus
$T$. In addition to being $T$-invariant, this section has the
important property that it has a zero of order 1 along every
$T$-invariant divisor in $X$.

In general, however, there need not be any $B$-invariant section $s
\in \Ha(U,\widecheck\omega_{G/H})$ as the following example shows.

\begin{example}
  \label{exa:no_invariant_generators}
  Let $G=\SL_2$ and $H \subseteq G$ a Borel subgroup.  Then the open
  $B$-orbit $U$ in $G/H$ is isomorphic to $\C$. It is not difficult to
  see that there is only one $B$-semi-invariant section in
  $\Ha(U,\widecheck\omega_{G/H})$ and that it is not $B$-invariant.
\end{example}

As we cannot expect a $B$-invariant section, we have to look for
something else.  An equivalent characterization for a homogeneous
space $G/H$ to be spherical is that the $G$-module $\Ha(G/H,\mathscr{L})$ is
multiplicity-free for every $G$-linearized invertible sheaf
$\mathscr{L}$, \ie the multiplicity of any simple $G$-module in the
decomposition of the module of global sections is at most $1$.  We
obtain
\begin{align*}
  \Ha(G/H, \widecheck\omega_{G/H}) \cong \bigoplus V_\chi\text{,}
\end{align*}
where $\chi$ runs over pairwise different dominant weights of $B$ and
$V_\chi$ is the simple $G$-module of highest weight $\chi$.

This decomposition is very simple when $H \subseteq G$ is a parabolic
subgroup since then $\Ha(G/H, \widecheck\omega_{G/H})$ is a simple
$G$-module by the Borel-Weil-Bott theorem (see
\cite{Demazure:Bott,Demazure:BottSimple}).
In particular, exactly one dominant
weight occurs (compare this with
Example~\ref{exa:no_invariant_generators}), and there is a unique
choice (up to a constant factor) of a $B$-semi-invariant section $s
\in\Ha(G/H,\widecheck\omega_{G/H})$ which restricts to a generator $s
\in \Ha(U, \widecheck\omega_{G/H})$.

Let $G/H$ again be an arbitrary spherical homogeneous space.  We
denote by $P \subseteq G$ the stabilizer of the open $B$-orbit $U$.
If $H$ contains a maximal unipotent subgroup of $G$, the homogeneous
space $G/H$ is called \emph{horospherical}, and the normalizer of $H$
in $G$ is a parabolic subgroup conjugated to the opposite parabolic of
$P$, which we denote by $P^{\minus}$.  Hence there is a natural
morphism $\pi \colon G/H \to G/P^{\minus}$, which is known to be a
torus fibration. Therefore $\pi^*( \widecheck \omega_{G/P^{\minus}} )
= \widecheck \omega_{G/H}$, the simple $G$-module $\Ha( G/P^{\minus},
\widecheck \omega_{G/P^{\minus}} )$ is a direct summand of $\Ha( G/H,
\widecheck \omega_{G/H} )$, and a unique $B$-semi-invariant section $s
\in \Gamma( G/P^{\minus}, \widecheck \omega_{G/P^{\minus}} ) \subseteq
\Ha( G/H, \widecheck\omega_{G/H} )$ exists, whose weight we denote by
$\kappa_P \in \X(B)$.

For arbitrary spherical homogeneous spaces $G/H$, there is no natural
morphism $\pi \colon G/H \to G/P^{\minus}$, but the following
statement is nevertheless valid.

\begin{theorem}[{\cite[4.1 and 4.2]{Brion:cc}}]
  \label{th:a1}
  The simple $G$-module
  $\Ha(G/P^{\minus},\widecheck\omega_{G/P^{\minus}})$ is a direct
  summand of $\Ha(G/H, \widecheck\omega_{G/H})$.  Equivalently, there
  exists a $B$-semi-invariant section
  \begin{align*}
    s \in \Ha(G/H, \widecheck\omega_{G/H})
  \end{align*}
  of weight $\kappa_P$, which restricts to a generator $s \in \Ha(U,
  \widecheck\omega_{G/H})$. For any spherical embedding $G/H
  \hookrightarrow X$ the section $s$ extends to a global section on
  $X$, and we have
  \begin{align*}
    \Div s = \sum_{i=1}^k m_i D_i + \sum_{j=1}^n X_j
  \end{align*}
  where $D_1, \ldots, D_k$ are the $B$-invariant prime divisors
  in $G/H$ (identified with their closures in $X$),
  the $m_i$ are positive integers depending only on the
  homogeneous space $G/H$, and $X_1, \ldots, X_n$ are the $G$-invariant
  prime divisors in $X$.
\end{theorem}

\begin{remark}
The weight $\kappa_P$ is the weight $\pi(\Div s)$
in the sense of \cite[3.3]{l-brion-pic}.
\end{remark}

The fact from Theorem~\ref{th:a1} that the section $s$ has a zero of
order 1 along any $G$-invariant prime divisor actually characterizes
this section.  This is a straightforward application of basic facts
from the embedding theory of spherical homogeneous spaces.

\begin{theorem}
  \label{th:a2}
  Let $s \in \Gamma( U, \widecheck \omega_{G/H} )$ be a generator
  (which is automatically $B$-semi-invariant). Then the following
  conditions are equivalent:
  \begin{enumerate}
  \item The section $s$ is of $B$-weight $\kappa_P$.
  \item For any spherical embedding $G/H \hookrightarrow X$ the
    section $s$ has a zero of order 1 along any $G$-invariant prime
    divisor.
  \end{enumerate}
\end{theorem}

We denote by $\Dm = \{D_1,
\ldots, D_k\}$ the set of $B$-invariant prime divisors in $G/H$, whose
elements are called the \emph{colors} of $G/H$.
In order to obtain an explicit formula for the coefficients $m_i$ in
Theorem~\ref{th:a1}, Brion divided the colors into several types
(see~\cite[4.2]{Brion:cc}). These types are in agreement with the
definition of the types of colors due to Luna (see~\cite[2.7]{Luna:cc},
\cite[2.3]{Luna:typea}, see also \cite[Section~30.10]{ti}), which we
now explain. For additional information on the types of colors, we
refer the reader to Section~\ref{sec:typcol}.

We choose a maximal torus $T \subseteq B$, denote by $R \subseteq
\X(T) = \X(B)$ the associated root system, and write $S \subseteq R$
for the set of simple roots corresponding to $B$.  For $\alpha \in S$
we denote by $P_\alpha \subseteq G$ the corresponding minimal 
parabolic subgroup containing $B$, and define
\begin{align*}
  \Dm(\alpha) \coloneqq \{D_i \in \Dm : P_\alpha \cdot D_i \ne
  D_i\}\text{.}
\end{align*}
As the colors are not $G$-stable, every color is moved by at least
one minimal parabolic subgroup, so that we have $\Dm =
\bigcup_{\alpha \in S}\Dm(\alpha)$. Moreover, the stabilizer $P \subseteq G$
of the open $B$-orbit is the parabolic subgroup containing $B$
corresponding to the set $S^p \coloneqq \{\alpha \in S : \Dm(\alpha) =
\emptyset\}$.

We denote by $\Mm \subseteq \X(B)$ the weight lattice of
$B$-semi-invariants in the function field $\C(G/H)$ and by $\Nm
\coloneqq \Hom(\Mm, \Z)$ the dual lattice together with the natural
pairing $\langle \cdot, \cdot\rangle \colon \Nm \times \Mm \to \Z$.
We denote by $\Vm$ the set of $G$-invariant discrete valuations on
$\C(G/H)$, and define the map $\iota \colon \Vm \to \Nm$ by $\langle
\iota(\nu), \chi \rangle \coloneqq \nu(f_\chi)$ where $f_\chi \in
\C(G/H)$ is $B$-semi-invariant of weight $\chi \in \Mm$ and unique up
to a constant factor.  As the map $\iota$ is injective, we may
consider $\Vm$ as a subset of the vector space $\Nm_\Q \coloneqq \Nm
\otimes_\Z \Q$. It is known that $\Vm$ is a cosimplicial cone
(see~\cite{brg}), called the \emph{valuation cone} of $G/H$. In
particular, the valuation cone is full-dimensional.  By $\Sigma$ we
denote the set of primitive generators in $\Mm$ of the extremal rays
of the negative of the dual of the valuation cone $\Vm$. The elements
in $\Sigma$ are called the \emph{spherical roots} of $G/H$.

The type of a color $D_i \in \Dm(\alpha)$ is defined as follows: If
$\alpha \in \Sigma$, we say that $D_i$ is of type $a$. If $2\alpha \in
\Sigma$, we say that $D_i$ is of type $2a$. Otherwise, we say that
$D_i$ is of type $b$. The type does not depend on the choice $\alpha
\in S$ such that $D_i \in \Dm(\alpha)$. Moreover,
we have $|\Dm(\alpha)| \le 2$ with $|\Dm(\alpha)| = 2$
if and only if $\alpha \in \Sigma$ (\ie the colors
in $\Dm(\alpha)$ are of type $a$).

We can now state the explicit formula for the coefficients $m_i$ in
the expression for $\Div s$ due to Brion and Luna.

\begin{theorem}[{\cite[Theorem~4.2]{Brion:cc}, \cite[3.6]{Luna:cc}}]
  \label{th:ac}
  We have
  \begin{align*}
    m_i &= \tfrac{1}{2} \langle \alpha^\vee, \kappa_P \rangle = 1
    &&\text{ for $D_i$ of type $a$ or $2a$,}\\
    m_i &= \langle \alpha^\vee, \kappa_P \rangle \ge 2 && \text{ for $D_i$
      of type $b$.}
  \end{align*}
\end{theorem}

This concludes the overview, so that we are now able to explain the
combinatorial description of Gorenstein spherical Fano varieties. Let
$m_1, \ldots, m_k \in \Z_{>0}$ be the coefficients of the colors in
the expression for the anticanonical divisor from Theorem~\ref{th:a1}.
We define the map $\rho \colon \Dm \to \Nm$ by $\langle \rho(D_i),
\chi\rangle \coloneqq \nu_{D_i}(f_\chi)$ where $\nu_{D_i}$ is the discrete
valuation associated to $D_i \in \Dm$.

\begin{definition}
  \label{defn:Q_X}
  Let $G/H \hookrightarrow X$ be a complete spherical embedding, and
  let $X_1,\ldots, X_n$ be the $G$-invariant prime divisors in $X$. We
  define the polytope
  \begin{align*}
    Q_X \coloneqq \conv \rleft( \frac{ \rho(D_1) }{ m_1}, \ldots,
    \frac{ \rho(D_k) }{ m_k }, \nu_{X_1}, \ldots, \nu_{X_n} \rright)
    \subseteq \Nm_\Q.
  \end{align*}
\end{definition}

\begin{remark}
  The polytope introduced in Definition~\ref{defn:Q_X} has been used
  by Alexeev and Brion to prove boundedness of spherical Fano
  varieties in \cite{albr}.
\end{remark}

The generalization of the notion of a reflexive polytope to the theory
of spherical varieties is the following (it is a generalization of
\cite[D\'{e}finition~3.3]{Pasquier:FanoHorospherical}).  For a
polytope $Q$ we denote by $Q^*$ its dual polytope, and for a face $F
\preceq Q$ we denote by $\widehat{F} \preceq Q^*$ its dual face.

\begin{definition}
  \label{def:qghrefl}
  A polytope $Q \subseteq \Nm_\Q$ is called \emph{$G/H$-reflexive} if
  the following conditions are satisfied:
  \begin{enumerate}
  \item $\rho( D_i ) / m_i \in Q$ for every $i = 1, \ldots, k$.
  \item $0 \in \topint(Q)$.
  \item Every vertex of $Q$ is contained in $\{ \rho( D_i ) / m_i : i
    = 1, \ldots, k \}$ or $\Nm \cap \Vm$.
  \item Every vertex $v \in Q^*$ satisfying $\relint( \cone(
    \widehat{v} ) ) \cap \Vm \neq \emptyset$ lies in the lattice
    $\Mm$.
  \end{enumerate}
  Note that $\cone(\widehat{v})$ is a full-dimensional cone in
  $\Nm_\Q$ for every vertex $v \in Q^*$.
\end{definition}

\begin{theorem}
  \label{thm:bijection-Fano-poly}
  The assignment $X \mapsto Q_X$ induces a bijection between
  isomorphism classes of Gorenstein spherical Fano embeddings $G/H
  \hookrightarrow X$ and $G/H$-reflexive polytopes.
\end{theorem}

Using Theorem \ref{thm:bijection-Fano-poly}, one may translate
questions about Gorenstein spherical Fano varieties into the realm of
convex combinatorics. Applying this approach, we are going to prove
the following bound on the Picard number.
\begin{theorem}
  \label{theorem:le-2d}
  Let $X$ be a $\Q$-factorial Gorenstein spherical Fano variety of
  dimension $d$ and Picard number $\rho_X$.  Then we have
  \begin{align*}
    \rho_X \le 2d\text{,}
  \end{align*}
  with $\rho_X = 2d$ if and only if $d$ is even and $X \cong
  (S_3)^{d/2}$ where $S_3$ is the blowup of $\Pb^2$ at three
  non-collinear points.
\end{theorem}

Theorem~\ref{theorem:le-2d} has been proven by Casagrande in the case
of a toric variety $X$ (see \cite{Casagrande:numVerts}) and by
Pasquier in the case of a horospherical variety $X$ (see
\cite{Pasquier:FanoHorospherical}).  Our proof is inspired by the two
previous works.  Observe that Theorem~\ref{theorem:le-2d} does not
hold for an arbitrary variety $X$, \eg if $S$ is the surface given by
blowing-up $\Pb^2$ in eight general points, the variety $S^m$ has
Picard number $9m$ (see \cite{Debarre:FanoVarieties}).

\subsection*{List of general notation}
\renewcommand{\descriptionlabel}[1]{\hspace\labelsep #1}
\begin{description}[leftmargin=8em,style=nextline]
\item[{$\X(G)$}] character lattice of a connected algebraic group $G$,
\item[{$Q^*$}] dual polytope to a polytope $Q$ in a vector space $V$, \ie \\
  $Q^* = \{ v \in V^* : \langle u, v \rangle \ge -1 \text{ for every
    $u \in Q$}\}$,
\item[{$\widehat{F}$}] dual face to a face $F$ of a polytope $Q$, \ie \\
  $\widehat{F} \coloneqq \{ v \in Q^* : \langle u, v \rangle = -1
  \text{ for every $u \in F$}\}$,
\item[{$\topint(A)$}] topological interior of a subset $A$ in some
  finite-dimensional vector space,
\item[{$\relint(A)$}] relative interior of a subset $A$ in some
  finite-dimensional vector space, \ie topological interior of $A$ in
  the affine span of $A$.
\end{description}

\tableofcontents

\section{Notation and generalities}
\label{sec:notat-gener}

Spherical embeddings admit a combinatorial description due to the
Luna-Vust theory (see~\cite{lunavust, knopsph}). 
Similarly to the theory of toric
varieties, one obtains a description of spherical embeddings of $G/H$
by colored fans, which are combinatorial objects living in the vector
space $\Nm_\Q$.

\begin{definition}
  A \emph{colored cone} is a pair $(\Cm, \Fm)$ where $\Fm \subseteq
  \Dm$ and $\Cm \subseteq \Nm_\Q$ is a cone generated by $\rho(\Fm)$
  and finitely many elements of $\Vm$.  A colored cone is called
  \emph{supported} if $\relint(\Cm) \cap \Vm \ne \emptyset$.  A
  colored cone is called \emph{strictly convex} if $\Cm$ is strictly
  convex and $0 \notin \rho(\Fm)$.
\end{definition}

\begin{definition}
  A \emph{face} of a colored cone $(\Cm, \Fm)$ is a colored cone
  $(\Cm', \Fm')$ such that $\Cm'$ is a face of $\Cm$ and $\Fm' = \Fm
  \cap \rho^{-1}(\Cm')$. It is called a \emph{supported face} if it is
  supported as a colored cone.
\end{definition}

\begin{definition}
  A \emph{colored fan} is a nonempty finite collection $\Ff$ of
  strictly convex colored cones such that for every $(\Cm, \Fm) \in
  \Ff$ every face of $(\Cm, \Fm)$ is also in $\Ff$ and for every $v
  \in \Nm_\Q$ there is at most one $(\Cm, \Fm) \in \F$ with $v \in
  \relint(\Cm)$.  A colored fan $\Ff$ is called \emph{complete} if
  $\Supp \Ff \coloneqq \bigcup_{(\Cm, \Fm) \in \Ff} \Cm = \Nm_\Q$.
\end{definition}

\begin{definition}
  A \emph{supported colored fan} is a nonempty finite collection $\Ff$
  of strictly convex supported colored cones such that for every
  $(\Cm, \Fm) \in \Ff$ every supported face of $(\Cm, \Fm)$ is also in
  $\Ff$ and for every $v \in \Vm$ there is at most one $(\Cm, \Fm) \in
  \F$ with $v \in \relint(\Cm)$.  A supported colored fan $\Ff$ is
  called \emph{complete} if $\Supp \Ff \supseteq \Vm$.
\end{definition}

\begin{remark}
  We have defined the terms \enquote{colored cone} and
  \enquote{colored fan} with and without the adjective
  \enquote{supported}.  In the literature, only the supported versions
  are usually defined (and without the adjective \enquote{supported}).
\end{remark}

\begin{remark}
  \label{rem:s_maps_complete_to_complete}
  There is a natural map
  \begin{align*}
    \{ \text{colored fans} \} &\to \{ \text{supported colored
      fans} \}\\
    \Ff &\mapsto \Ff_{\Supp} \coloneqq \{ \sigma \in \Ff : \sigma
    \text{ is supported}\}\text{.}
  \end{align*}
  If $\Ff$ is complete, every point in $\Vm$ is contained in
  $\relint(\Cm)$ for exactly one $(\Cm, \Fm) \in \Ff$, which is
  supported, so $\Ff_{\Supp}$ is complete as well (as a supported
  colored fan).
\end{remark}

\begin{theorem}[{\cite[Theorem~3.3]{knopsph}}]
  Supported colored fans are in bijective correspondence with
  isomorphism classes of spherical embeddings $G/H \hookrightarrow
  X$. Moreover, $X$ is complete if and only if the corresponding
  supported colored fan is complete.
\end{theorem}

We now recall some results on divisors in spherical varieties due to
Brion (see~\cite{l-brion-pic}).  We use \cite[Section~17]{ti} as
general reference. Let $G/H \hookrightarrow X$ be a complete spherical
embedding and $\Ff$ the corresponding supported colored fan. Let $E$
be a $B$-stable Weil divisor.  The Weil divisor $E$ is $\Q$-Cartier
(resp. Cartier) if and only if for every maximal supported colored
cone $(\Cm, \Fm) \in \Ff$ there exists $v_\Cm \in \Mm_\Q$
(resp. $v_\Cm \in \Mm$) such that for every $B$-stable prime divisor
$D$ which contains the $G$-orbit in $X$ corresponding to $(\Cm, \Fm)$
the multiplicity of $D$ in $E$ is $\langle \rho(D), v_\Cm \rangle$ if
$D$ is a color and $\langle \nu_D, v_\Cm \rangle$ if $D$ is
$G$-stable. The linear functions $\langle \cdot, v_\Cm \rangle : \Cm
\cap \Vm \subseteq \Nm_\Q \to \Q$ may be pasted together to a
piecewise linear function $\psi_E \colon \Vm \to \Q$.

\begin{prop}[{\cite[Proposition~3.1]{l-brion-pic}}]
\label{prop:sphfqf}
$X$ is $\Q$-factorial (resp.~locally factorial) if and only if
for every maximal $(\Cm, \Fm) \in \Ff$
the cone $\Cm$ is spanned by a part of a $\Q$-basis of $\Nm_\Q$
(resp. by a part of a $\Z$-basis of $\Nm$)
containing $\rho(\Fm)$ and $\rho|_{\Fm}$
is injective.
\end{prop}

\begin{prop}[{\cite[Th\'eor\`eme~3.3]{l-brion-pic}, see also \cite[Corollary~17.24]{ti}}]
  \label{prop:ample}
  Let $E$ be a $\Q$-Cartier divisor on the complete spherical variety
  $X$ and $\psi_E\colon \Vm \to \Q$ the associated piecewise linear
  function.  Then $E$ is ample if and only if
  \begin{enumerate}
  \item the piecewise linear function $\psi_E$ is strictly convex, \ie
    for every $u \in \Cm \setminus \Cm'$ we have $\langle u, v_\Cm
    \rangle > \langle u, v_{\Cm'}\rangle$ for any two maximal $(\Cm,
    \Fm), (\Cm', \Fm') \in \Ff$, and
  \item for every maximal $(\Cm, \Fm) \in \Ff$ and every $D \in
    \Dm\setminus \Fm$ the multiplicity of $D$ in $E$ is strictly
    greater than $\langle \rho(D), v_\Cm\rangle$.
  \end{enumerate}
\end{prop}

Finally, we will require the following definition.

\begin{definition}
  \label{def:colored_face_fan}
  Let $Q \subseteq \Nm_\Q$ be a polytope with $0 \in \topint(Q)$.
  We associate to it the colored fan 
  $\Ff(Q)$ 
  consisting of the colored cones $( \cone(F), \rho^{-1}(F) )$ for all
  proper faces $F$ of $Q$.  As $Q$ is full-dimensional, the colored
  fan $\Ff(Q)$ is complete. The colored fan $\Ff(Q)$ is called
  the \emph{colored face fan} of $Q$.
  We write $\Ff_{\Supp}(Q)$ for $(\Ff(Q))_{\Supp}$. 
\end{definition}

\section{The co- and tangent sheaves of a smooth $G$-variety}
\label{sec:CoTangentSheafGVariety}
In this section, let $X$ be an arbitrary smooth $G$-variety for an
arbitrary algebraic group $G$. Then $G$ acts on $X$ by an action
morphism $\alpha \colon G \times X \to X$.  We denote by $\mu \colon G
\times G \to G$ the multiplication morphism of the algebraic group $G$
and by $\pi_X \colon G \times X \to X$ (resp.~
$\pi_{G\times X} \colon G \times G \times X \to G \times X$)
the natural projection on the second (resp.~on the second and third)
factor. Let us
repeat the definition of a $G$-linearization of a quasicoherent sheaf
(see \cite[Definition~C.2]{ti} or
\cite[Definition~1.6]{MumfordFogartyKirwan:GIT}).

\begin{definition}
  \label{defn:GSheaf}
  A \emph{$G$-linearization} of a quasicoherent sheaf $\F$ on $X$ is
  an isomorphism of quasicoherent sheaves $\widehat{\alpha} \colon
  \pi_X^*\F \xrightarrow{\sim} \alpha^*\F$ satisfying the cocycle
  condition, \ie the diagram in Figure~\ref{fig:ccc} commutes.
  \begin{figure}[!ht]
    \xymatrix@C=1.5cm{
      \rleft(\pi_X \circ\pi_{G \times X} \rright)^*\F
      \ar[r]^{\pi_{G \times X}^* \widehat{\alpha}} \ar@{=}[dd] &
      \rleft(\alpha\circ\pi_{G\times X}\rright)^*\F\ar@{=}[d]& \\
      &\rleft( \pi_X \circ( \id_G \times \alpha ) \rright)^*\F
      \ar[r]^{\rleft( \id_G \times \alpha \rright)^* \widehat{\alpha}}
      & \rleft(\alpha\circ(\id_G\times\alpha)\rright)^*\F\ar@{=}[d] \\
      \rleft( \pi_X \circ( \mu \times \id_X ) \rright)^*\F
      \ar[rr]^{\rleft( \mu \times \id_X \rright)^* \widehat{\alpha}} &
      & \rleft( \alpha \circ( \mu \times \id_X ) \rright)^*\F
    }
    \caption{The cocycle condition.}
    \label{fig:ccc}
  \end{figure}
\end{definition}

Recall that the \emph{cotangent sheaf $\Omega_X$} of $X$ is, locally
on affine open neighbourhoods $U$, given as the sheaf associated to
the module of K\"ahler differentials
$\Omega_{\mathcal{O}_X(U)/\C}$. The pullback of differential forms
with respect to the action morphism $\alpha$ yields the inverse of a
$G$-linearization of the cotangent sheaf, namely
$\widehat{\alpha}^{-1} \colon \alpha^*\Omega_X \xrightarrow{\sim}
\pi_X^*\Omega_X$. As $X$ is smooth, we may dualize
$\widehat{\alpha}^{-1}$ and obtain a $G$-linearization of the tangent
sheaf $\Tm_X \coloneqq \SHom_{\Om_X}(\Omega_X,\Om_X)$, namely
$\widehat{\beta} \coloneqq ( \widehat{\alpha}^{-1})^\vee \colon
\pi_X^*\Tm_X \xrightarrow{\sim} \alpha^*\Tm_X$.

For an affine open subset $U \subseteq X$ and $g\in G$, let us denote
the coordinate rings of $U$ and $g\cdot U$ by $A$ and $B$
respectively.  The element $g\in G$ acts on a local section
$\delta\in\Der_\C(A,A)=\Gamma(U,\Tm_X)$ by restricting the
$G$-linearization to $\{g\}\times X$, \ie $g \cdot \delta
\coloneqq\widehat{\beta} |_{\{g\} \times X} (\delta)$. It is
straightforward to check that
\[
g\cdot\delta=\lambda_g^\#\circ\delta\circ\lambda_{g^{-1}}^\#\in\Der_\C(B,B)
= \Gamma(g \cdot U,\Tm_X)\text{,}
\]
where $\lambda_g \colon X \to X$ is given by $x \mapsto g^{-1}\cdot
x$.

Let $\G=(\C,+)$ or $\G=(\C^*,\cdot)$ be a one-dimensional connected
algebraic group with neutral element $e \in \G$.  We will recall how
one can associate a global vector field $\uf\in\Gamma(X,\Tm_X)$ to a
one-parameter subgroup $u \colon \G \to G$. The coordinate ring of
$\G$ is either the polynomial ring $\C[t]$ or the Laurent polynomial
ring $\C[t^{\pm1}]$. In both cases, we have a natural choice of a
basis of the tangent space of $\G$ over the point $e$, namely
$\tfrac{\partial}{\partial t}\big|_{e} \in \Tm_\G|_e$.  Let $U$ be an
affine open subset of $X$ with coordinate ring $A$. The restriction of
$\uf$ to $U$ lies in $\Der_\C(A,A)$ and is given by
\[
\rleft(\uf|_Uf\rright)(x)\coloneqq \frac{\partial}{\partial t}\bigg|_e
f\rleft(u(t)\cdot x\rright)
\]
for $f\in A$, $x\in U$.  It is straightforward to check that these
local sections are well-defined and glue to a global section
$\uf\in\Gamma(X,\Tm_X)$.

\begin{remark}
  \label{rem:Invariant_vector_field}
  Assume that $X=G$ and $G$ acts on itself by left translation.  It is
  then straightforward to check that $\uf$ is an \emph{invariant}
  vector field, \ie $\rho_g^\#\circ\uf\circ\rho_{g^{-1}}^\#=\uf$ for
  all $g\in G$, where $\rho_g \colon G \to G$ is given by $h \mapsto
  hg$. Moreover, $\uf|_e \in\Lie u(\G)$.
\end{remark}

\section{The anticanonical sheaf of a spherical variety}
\label{sec:antic-sheaf-spher}

From now on, we continue to use the notation and the assumptions
from the introduction. In this section, we reproduce (with more detail)
the proof of \cite[4.1]{Brion:cc}.

Let $G/H \hookrightarrow X$ be a spherical embedding.  We denote the
$G$-invariant prime divisors in $X$ by $X_1, \ldots, X_n$.  
As we are interested in the anticanonical sheaf of $X$, we may
assume that $X$ does not contain $G$-orbits of codimension two or
greater.  In particular, $X$ is smooth toroidal and $\Pic(X) = \Cl(X)$.

Since $X$ is a smooth toroidal variety, it has the following local
structure (see \cite[Theorem~29.1]{ti}): The set $X^0\coloneqq
X\ohne\bigcup_{i=1}^k\overline{D_i}$ is stable by $P$. Let $L$ be the Levi subgroup of $P$ containing $T$. There exists
a closed $L$-stable subvariety $Z$ of $X^0$
such that
\begin{align*}
  R_u(P)\times Z&\to X^0 \\
  (u,z)&\mapsto u\cdot z
\end{align*}
is a $P$-equivariant isomorphism. The kernel of the $L$-action on $Z$,
which we denote by $L_0$, contains $(L,L)$ and $Z$ is a toric
embedding of $L/L_0$. Every $G$-orbit intersects $Z$ in a unique
$L/L_0$-orbit.

Any $x_0 \in U$ corresponds to some
$(u_0,z_0)\in R_u(P)\times Z$ under the isomorphism above.
We fix $x_0$ such that $u_0$ is the neutral element of $R_u(P)$.
By replacing $H$ with a conjugate, we may assume that $x_0$ has stabilizer
$H \subseteq G$.

We denote by $\widecheck\omega_X \coloneqq\bigwedge^{\dim X}\Tm_X$,
\ie the top exterior power of the tangent sheaf, the anticanonical
sheaf of $X$.  It is an invertible sheaf and carries a natural
$G$-linearization induced from the $G$-linearization of $\Tm_X$.

The variety $Z$ is a toric variety for a quotient torus
of $T$. Let $T_0$ be the kernel of the $T$-action on $Z$ and let $T_1$
be a subtorus of $T$ with $T=T_0T_1$ such that $T_0\cap T_1$ is
finite. We have a commutative diagram of equivariant morphisms with
respect to the action of $B_0 \coloneqq R_u(P)T_1$:
\[
\xymatrix{
  B_0\ar[rr]\ar[dr] && B\cdot x_0\\
  &B_0/(T_0\cap T_1)\ar@{=}[ur]& }
\]
The arrows are finite coverings. In particular, the tangent space of
$X$ at $(u_0,z_0)$ is isomorphic to the direct sum of the tangent
spaces of $R_u(P)$ and $T_1$ at the corresponding neutral
elements. This is the lie algebra of $B_0$ which decomposes as
\[
\Lie B_0 =\Lie T_1 \oplus\bigoplus_{\alpha\in
  R^+\setminus\left<S^p\right>}\gf_\alpha
\]
where $\left<S^p\right>$ denotes the root system generated by $S^p$
and $\gf_\alpha$ denotes the subspace of $T$-semi-invariant vectors of
weight $\alpha$ in $\gf \coloneqq \Lie G$. Choose a realization
$(u_\alpha)_{\alpha\in R}$ of the root system $R$ (see
\cite[\S8.1]{Springer:LAG}) and choose a basis
$\lambda_1,\ldots,\lambda_r$ of the lattice of one-parameter
multiplicative subgroups of $T_1$.
In Section~\ref{sec:CoTangentSheafGVariety} we have seen how to
associate a global vector field $\uf_\alpha$ (resp.~$\lf_1, \ldots,
\lf_r$) to the one parameter subgroup $u_\alpha$ (resp.~$\lambda_1,
\ldots, \lambda_r$). We obtain a global section
\[
s\coloneqq\rleft(\bigwedge_{\alpha\in
  R^+\setminus\left<S^p\right>}\uf_\alpha
\rright)\wedge\lf_1\wedge\ldots\wedge\lf_r\in\Gamma(X,\widecheck\omega_X).
\]

\begin{prop}[{\cite[Proposition 4.1]{Brion:cc}}]
  The zero set of $s$ is exactly the union of the closures of the
  colors $D_i$ and the boundary divisors $X_j$.
\end{prop}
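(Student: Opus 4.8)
The plan is to compute the divisor of $s$ locally, exploiting the toroidal local structure $R_u(P) \times Z \xrightarrow{\sim} X^0$ together with the fact that the colors $D_i$ are precisely the divisors meeting the complement of $X^0$. I would split the boundary of $U$ in $X$ into two parts: the $G$-invariant divisors $X_1, \ldots, X_n$ (all of which meet $X^0$, since they pull back to the $L/L_0$-invariant divisors of the toric variety $Z$), and the colors $D_1, \ldots, D_k$ (none of which meet $X^0$). Since $s$ is a nowhere-vanishing section of $\widecheck\omega_{G/H}$ on the open $B$-orbit $U$ (this will be checked below), its divisor is supported on the boundary, so $\Div s = \sum_i m_i D_i + \sum_j a_j X_j$ for some nonnegative integers, and the claim is equivalent to showing $a_j = 1$ for all $j$ and $m_i \geq 1$ for all $i$.

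First I would verify that $s$ restricts to a generator of $\Gamma(U, \widecheck\omega_{G/H})$. Near $x_0 = (u_0, z_0)$ with $u_0$ the identity of $R_u(P)$ and $z_0$ in the open $T$-orbit of $Z$, the orbit map $B_0 \to B \cdot x_0$ is a finite covering, and the vector fields $\uf_\alpha$ (for $\alpha \in R^+ \setminus \langle S^p\rangle$) together with $\lf_1, \ldots, \lf_r$ are, by Remark~\ref{rem:Invariant_vector_field}, the images of a basis of $\Lie B_0 = \Lie T_1 \oplus \bigoplus_{\alpha \in R^+\setminus\langle S^p\rangle} \gf_\alpha$ under the differential of this map. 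Since $\dim B_0 = \dim X$ and the covering is étale over the open $B$-orbit, these vector fields form a frame for $\Tm_X$ on a neighborhood of $x_0$ inside $U$, so $s$ is a local generator of $\widecheck\omega_X$ there; as $U$ is a single $B$-orbit and $s$ is $B$-semi-invariant, it is a generator on all of $U$.

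Next, for the $G$-invariant divisors, I would use the product structure: on $X^0 \cong R_u(P) \times Z$ one has $\widecheck\omega_{X^0} \cong \pr_1^* \widecheck\omega_{R_u(P)} \otimes \pr_2^* \widecheck\omega_Z$, and under this identification $s$ becomes $\bigl(\bigwedge_\alpha \uf_\alpha\bigr) \otimes (\lf_1 \wedge \cdots \wedge \lf_r)$. The first factor is a translation-invariant top form on the unipotent group $R_u(P)$, hence nowhere vanishing; the $X_j$ are pulled back from the toric boundary divisors $Z_j$ of $Z$. On a toric variety the invariant section $\lambda_1 \tfrac{\partial}{\partial \lambda_1} \wedge \cdots$ has a zero of order exactly $1$ along each invariant divisor (the computation recalled in the introduction for the torus case), so $\mathrm{ord}_{X_j} s = \mathrm{ord}_{Z_j}(\lf_1 \wedge \cdots \wedge \lf_r) = 1$. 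This gives $a_j = 1$.

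Finally, for the colors, I must show $m_i \geq 1$, i.e. that $s$ actually vanishes along each $D_i$. The hard part will be to control the behavior of the vector fields $\uf_\alpha$ away from $X^0$, where the convenient product decomposition breaks down. The idea is that each color $D_i$ is moved by some minimal parabolic $P_\alpha$ with $\alpha \in S \setminus S^p$; equivalently, the vector field $\uf_\alpha$ associated to the root subgroup $u_\alpha$ is tangent to the fibers of the projection collapsing $D_i$ but the corresponding ``opposite'' direction $\uf_{-\alpha}$ (or a combination of root vector fields) degenerates along $D_i$. Concretely, pick a generic point $x \in D_i$ lying in the open $B$-orbit of $D_i$; since $D_i$ is $B$-stable of codimension one, $\dim B \cdot x = \dim X - 1$, so the values at $x$ of the vector fields generating $\Lie B \cdot x$ span only a hyperplane in $\Tm_X|_x$. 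One then shows that $s|_x$, being a wedge of such vector fields (after accounting for the finite covering and the torus part), lies in the top exterior power of that hyperplane, hence vanishes at $x$; since this holds on a dense subset of $D_i$, we get $m_i = \mathrm{ord}_{D_i} s \geq 1$. Combining everything, $\Div s = \sum_i m_i D_i + \sum_j X_j$ with all coefficients strictly positive, so the zero set of $s$ is exactly $\bigcup_i D_i \cup \bigcup_j X_j$.
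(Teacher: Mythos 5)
Your proof is correct, but you take a more roundabout route than the paper and in doing so you flag a difficulty that does not exist. The paper's argument for the vanishing of $s$ along the boundary is a \emph{single uniform tangency argument}: for each one-parameter subgroup $u$ used in building $s$, $u$ maps into $B$, so for any $B$-stable divisor $Y$ (whether a color $D_i$ or a boundary divisor $X_j$) the vector field $\uf$ is tangent to $Y$, \ie $\uf|_y\in\Tm_Y|_y$ for all $y\in Y$; since $\dim X$ such vector fields are wedged and $\Tm_Y|_y$ has dimension $\dim X-1$ at a generic point of $Y$, the wedge vanishes along $Y$. You rediscover exactly this argument in your last paragraph for the colors, but you apply it only to the $D_i$ and instead handle the $X_j$ via the product decomposition $X^0\cong R_u(P)\times Z$ and the toric computation of $\ord_{X_j}s$. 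That toric computation is in fact the content of Corollary~\ref{cor:bo} (where the point is that $r_j=1$ exactly, not merely $r_j\ge1$), so you are importing a stronger result than the proposition requires. Correspondingly, the ``hard part'' you announce --- controlling $\uf_\alpha$ away from $X^0$ where the product structure breaks down --- is a phantom: the tangency argument is global and uses nothing of the local structure theorem, and it would have handled the $X_j$ just as cleanly as the $D_i$, with no toric detour needed. The remarks about $\uf_{-\alpha}$ ``degenerating'' are a red herring since $s$ involves only positive-root and torus vector fields.

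Two smaller points. First, to pass from non-vanishing near $x_0$ to non-vanishing on all of $U$ you invoke the $B$-semi-invariance of $s$, which in the paper is only established afterwards (Proposition~\ref{prop:weight_of_special_section}); the paper instead uses the left-invariance of the vector fields $\uf'$ on $B_0$ from Remark~\ref{rem:Invariant_vector_field}, which is already available and is weaker. Your logic goes through, but it creates a forward dependency one should be aware of. Second, when identifying the $\uf_\alpha|_{B\cdot x_0}$ with pushforwards from $B_0$ you should justify that the pushforward is well-defined: this is precisely where the paper uses that $\uf'$ is $T_0\cap T_1$-invariant, and your write-up glosses over it.
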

\begin{proof}
  Let $u$ be a one-parameter subgroup used in the definition of $s$,
  \ie $u=u_\alpha$ or $u \in \{\lambda_1,\ldots,\lambda_r\}$.  Let $Y$
  be a $B$-stable divisor, \ie $Y=\overline{D_i}$ or $Y=X_j$. Since $u$ maps into
  $B$, by construction, $\uf|_y \in \Tm_Y|_y$ for all $y\in Y$. Since
  $Y$ has codimension $1$ in $X$ and the number of vector fields which
  have been wedged to obtain $s$ is $\dim X$, the global section $s$
  vanishes on $Y$.
 
  We now show that $s$ vanishes nowhere on the open $B$-orbit. Since
  $u$ maps into $B_0$, we may define a global vector field $\uf'$ on
  $B_0$. By the local structure theorem, $B_0$ is a finite covering of
  $B\cdot x_0$.  In particular, we have a well-defined pushforward of
  vector fields, and $\uf|_{B\cdot x_0}$ is the pushforward of $\uf'$.
  Since $\uf'$ is $T_0 \cap T_1$-invariant
  (see~Remark~\ref{rem:Invariant_vector_field}), it suffices to show
  that $s' \in \Gamma(B_0, \widecheck\omega_{B_0})$, \ie the global
  section of the anticanonical sheaf of $B_0$ which arises by wedging
  all the global vector fields $\uf'$, vanishes nowhere.
 
  Since $\uf'|_e \in \Lie u(\G)$, it follows that $s'|_e$ arises by
  wedging a basis of $\Lie B_0$. In particular, $s'$ does not vanish
  at $e$. Since $s'$ is invariant, it follows that it vanishes
  nowhere.
\end{proof}
Recall from the introduction that, by the Borel-Weil-Bott theorem, the
space $\Gamma(G/P^{\minus},\widecheck\omega_{G/P^{\minus}})$ is a
simple $G$-module, whose highest weight we denote by $\kappa_P$.
Then the following result implies Theorem~\ref{th:a1}.

\begin{prop}[{\cite[Proof of Theorem~4.2]{Brion:cc}}]
  \label{prop:weight_of_special_section}
  The global section $s \in \Gamma(X, \widecheck\omega_X)$ is
  $B$-semi-invariant of weight $\kappa_P$.
\end{prop}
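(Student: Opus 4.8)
The plan is to compute the $B$-weight of $s$ by understanding how $B$ acts on each of the vector fields $\uf_\alpha$ and $\lf_i$ entering the wedge. First I would reduce the problem to the point $x_0$: since $s$ is a global section of an invertible sheaf and the $B$-action on $\Gamma(X,\widecheck\omega_X)$ is by the $G$-linearization, it suffices to determine the character through which the isotropy-modulo-unipotent acts on $s|_{x_0}$, which lies in the fiber $\widecheck\omega_X|_{x_0} = \bigwedge^{\dim X} T_{x_0}X$. Concretely, using the formula $g\cdot\delta = \lambda_g^\#\circ\delta\circ\lambda_{g^{-1}}^\#$ from Section~1 and the local structure theorem identification $T_{x_0}X \cong \Lie B_0 = \Lie T_1 \oplus \bigoplus_{\alpha\in R^+\smallsetminus\langle S^p\rangle}\gf_\alpha$, the point is that $s|_{x_0}$ is, up to a nonzero scalar, the wedge of a basis vector of $\Lie T_1$ for each $\lambda_i$ together with a root vector $e_\alpha \in \gf_\alpha$ for each $\alpha\in R^+\smallsetminus\langle S^p\rangle$ — this is exactly what the final paragraph of the preceding proof established.

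Next I would identify the action of $T$ (and hence, since the unipotent radical of $B$ acts trivially on the relevant highest-weight line, of $B$) on this wedge. The torus $T_1$ part contributes trivially: each $\lf_i$ is built from a one-parameter subgroup of $T_1$, invariant under $T_1$-translation, so $T$ acts on $\lf_i|_{x_0}$ through a character that turns out to be trivial (the $\Lie T_1$ directions are fixed by the adjoint-type action once one accounts for the left-translation convention). The root-vector part contributes $\sum_{\alpha\in R^+\smallsetminus\langle S^p\rangle}\alpha$: translating $x_0$ by $t\in T$ scales $e_\alpha$ by the character $\alpha(t)$ up to the base-point adjustment, so the wedge picks up $\prod_\alpha \alpha(t)$. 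Therefore the $B$-weight of $s$ is $\sum_{\alpha\in R^+\smallsetminus\langle S^p\rangle}\alpha$. It then remains to identify this sum with $\kappa_P$: by the Borel--Weil--Bott description, $\kappa_P$ is the highest weight of $\Gamma(G/P^{\minus},\widecheck\omega_{G/P^{\minus}})$, and the canonical bundle of $G/P^{\minus}$ is well known to have weight $-\sum_{\alpha\in R^+\smallsetminus\langle S^p\rangle}\alpha$, so the anticanonical weight is precisely $\sum_{\alpha\in R^+\smallsetminus\langle S^p\rangle}\alpha = \kappa_P$. One should double-check the sign/opposite-parabolic conventions here, since $P^{\minus}$ rather than $P$ appears, but the set $R^+\smallsetminus\langle S^p\rangle$ is insensitive to this swap in the relevant way.

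Finally I would also argue that $s$ is genuinely a $B$-eigenvector (not merely that its value at $x_0$ transforms by a character): because $B\cdot x_0 = U$ is dense and $s$ restricted to $U$ generates the invertible sheaf there, the function $g \mapsto$ (the scalar by which $g$ sends $s|_{x_0}$) extends to a character of $B$, and $s$ is the unique (up to scalar) section with that property on $U$; being a global section, it is then a $B$-semi-invariant on all of $X$ of that weight. The main obstacle is bookkeeping: getting the conventions straight for how the $G$-linearization of $\Tm_X$ translates into an honest linear action on the fiber $T_{x_0}X$ via the covering $B_0 \to B\cdot x_0$, tracking the finite kernel $T_0\cap T_1$ (which is harmless since we only care about the weight in the character lattice, where finite ambiguities wash out), and matching the left-translation convention $\lambda_g(x) = g^{-1}x$ used in Section~1 against the adjoint action on root spaces so that the contributions of the $\uf_\alpha$ come out with the correct sign. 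Everything else is a direct computation.
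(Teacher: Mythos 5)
Your overall strategy --- compute the $T$-weight of each vector field $\uf_\alpha$ and $\lf_i$ entering the wedge, and match the resulting sum $\sum_{\alpha\in R^+\smallsetminus\langle S^p\rangle}\alpha$ with $\kappa_P$ --- is essentially the paper's, and your final weight formula and the Borel--Weil--Bott identification are correct. But there are two genuine gaps in the way you have set up the argument.

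The reduction to the fiber at $x_0$ does not do what you want. Only the isotropy subgroup $B\cap H$ acts on the one-dimensional fiber $\widecheck\omega_X|_{x_0}$, so ``the character through which the isotropy-modulo-unipotent acts on $s|_{x_0}$'' is only the \emph{restriction} of the $B$-weight of $s$ to $B\cap H$. The characters of $B$ that vanish on $B\cap H$ form, up to sign, the lattice $\Mm$ (they are exactly the weights of $B$-semi-invariant rational functions on $G/H$), so the fiber action pins down the weight of $s$ only modulo $\Mm$ --- in other words, not at all. Phrases like ``$T$ acts on $\lf_i|_{x_0}$ through a character'' are not literally meaningful for $t\in T$ that do not fix $x_0$; you are sliding from the fiber action (defined only for the stabilizer) to the $T$-action on the global vector field $\lf_i$ (defined for all of $T$). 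The paper's computation is carried out on the vector fields themselves: it verifies $w\cdot\uf_\alpha=\alpha(w)\,\uf_\alpha$ as an equality of sections over $U$, for every $w\in T$ --- this is where the honest weight comes from, and it cannot be extracted from the fiber at a single point.

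The argument for $B$-semi-invariance in your last paragraph is circular. Setting $c(g,x)=(g\cdot s|_x)/s|_{gx}$, the cocycle condition of the linearization gives $c(gh,x)=c(g,x)\,c(h,g^{-1}x)$; this collapses to the multiplicativity $c(gh,x_0)=c(g,x_0)c(h,x_0)$ only when $c(\cdot,x)$ is independent of $x$ --- which is exactly the semi-invariance you are trying to prove. What actually closes this gap is Rosenlicht's theorem, used in the paper via \cite[Proposition~1.3~(ii)]{knoppic}: after a $B$-equivariant trivialization $\widecheck\omega_U\cong\Om_U(\chi)$, the generator $s$ corresponds to an invertible regular function on the orbit $U=B/(B\cap H)$, and every such unit is automatically $B$-semi-invariant up to a scalar. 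This step cannot be replaced by a uniqueness argument at a point.

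Finally, a minor remark: the paper avoids the explicit Borel--Weil--Bott computation. Having observed that the weight of $s$ depends only on $S^p$, it simply specializes to $X=G/P^{\minus}$, where by definition the $B$-semi-invariant section of the simple module $\Gamma(G/P^{\minus},\widecheck\omega_{G/P^{\minus}})$ has weight $\kappa_P$. Your more explicit route also works, but it requires the sign and opposite-parabolic bookkeeping you yourself flag as a potential pitfall.
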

\begin{proof}
  Since $U$ has trivial Picard group, every $B$-linearized invertible
  sheaf on $U$ is $B$-equivariantly isomorphic to $\Om_U(\chi)$ for
  some $\chi \in \X(B)$. We have maps $\widecheck\omega_U \to
  \Om_U(\chi) \to \Om_U$, where the first map is chosen to be a
  $B$-equivariant isomorphism, and the second map is canonical, but
  not necessarily $B$-equivariant.  Let $f$ be the image of $s$ under
  the composed map. By \cite[Proposition~1.3,~(ii)]{knoppic}, the
  regular function $f$ is $B$-semi-invariant. It follows that $s$ is
  $B$-semi-invariant as well, where the weight has to be corrected by
  the twist $\chi$.

  Next, we determine the weight of $s$. Let $w\in T$,
  $f\in\Gamma(U,\Om_U)$, and $x\in U$. Then we have
  \begin{align*}
    \rleft(\rleft(w\cdot \uf_\alpha\rright)(f)\rright)(x) &=
    \rleft(\rleft(\lambda_w^\#\circ\uf_\alpha\circ\lambda_{w^{-1}}^\#\rright)(f)\rright)(x)\\
    &=\frac{\partial}{\partial t}\bigg|_{e}f(wu(t)w^{-1} \cdot x) \\
    & =\frac{\partial}{\partial t}\bigg|_{e}f(u(\alpha(w)t) \cdot x) \\
    &=\rleft(\rleft(\alpha(w)\uf_\alpha\rright)(f)\rright)(x)\text{.}
  \end{align*}
  Hence $\uf_\alpha$ is $T$-semi-invariant of weight
  $\alpha$. Analogously, one can show that $\lf_i$ is $T$-invariant.
  In particular, the weight of $s$ depends only on the set $S^p$. As
  $G/P^{\minus}$ and $X$ have the same stabilizer of the open
  $B$-orbit, it follows that $s$ is $B$-semi-invariant of weight
  $\kappa_P$.
\end{proof}

\begin{remark}
  \label{rem:kphs}
  For $I \subseteq S$ we denote by $\rho_I$ the half-sum of the
  positive roots in the root system generated by $I$. 
  It is also known that $\rho_I$ is the sum
  of the fundamental dominant weights of the root system generated by $I$ (see
  \cite[Lemma~13.3A]{Humphreys:LieAlgebras}).
  By
  Proposition~\ref{prop:weight_of_special_section}, we have $\kappa_P
  = 2\rho_S - 2\rho_{S^p}$.
\end{remark}

\begin{remark}
  A nonzero $B$-semi-invariant rational section of
  $\widecheck\omega_X$ is uniquely determined by its weight $\chi \in
  \X(B)$ up to a constant factor, and such a rational section exists
  if and only if $\chi \in \kappa_P + \Mm$ (see also Section~\ref{sec:comp-with-moment}).
\end{remark}

\begin{cor}[{\cite[Proposition 4.1]{Brion:cc}}]
  \label{cor:bo}
  We have
  \[
  \Div s=\sum_{i=1}^km_iD_i+\sum_{j=1}^nX_j
  \]
  with $m_i \in \Z_{>0}$.
\end{cor}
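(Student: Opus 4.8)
The plan is to combine the two preceding propositions. From the Proposition we know that the zero set of $s$ is set-theoretically exactly $\bigcup_i D_i \cup \bigcup_j X_j$, so the support of $\Div s$ is contained in this union and every one of these prime divisors appears. Since $s$ is a global section of an invertible sheaf, $\Div s$ is an effective divisor, hence every coefficient is nonnegative, and by the previous sentence every coefficient of a $D_i$ or an $X_j$ is in fact strictly positive. It remains only to identify the coefficients along the $X_j$ as being equal to $1$.

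For that, first I would use Proposition~\ref{prop:weight_of_special_section}: $s$ is $B$-semi-invariant, so $\Div s$ is a $B$-invariant divisor, and since the only $B$-invariant prime divisors in the boundary of $U$ are the colors $D_i$ and the $G$-invariant divisors $X_j$, we may already write $\Div s = \sum_i m_i D_i + \sum_j a_j X_j$ with $m_i, a_j \in \Z_{>0}$. To pin down $a_j = 1$ I would work locally near the generic point of a fixed $X_j$, using the local structure theorem recalled above: on $X^0 = X \ohne \bigcup_i D_i$ the variety looks like $R_u(P) \times Z$ with $Z$ a toric embedding of the torus $L/L_0 \cong T_1/(T_0\cap T_1)$, and $X_j$ meets $Z$ in a unique $T_1/(T_0\cap T_1)$-invariant prime divisor. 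Under the factorization $R_u(P)\times Z$, the section $s$ is (up to the finite covering $B_0 \to B\cdot x_0$, which does not affect orders of vanishing) the wedge of the left-invariant vector fields $\uf_\alpha$ on the $R_u(P)$ factor with $\lf_1 \wedge \ldots \wedge \lf_r$ on the $Z$ factor. The $\uf_\alpha$ part is a nowhere-vanishing section of the anticanonical sheaf of $R_u(P)$ (it is a basis of left-invariant vector fields on a unipotent group), so it contributes nothing to $\Div s$ along $X_j$; the whole coefficient $a_j$ comes from the toric factor, where $\lf_1\wedge\ldots\wedge\lf_r$ is exactly the section $x_1\tfrac{\partial}{\partial x_1}\wedge\ldots\wedge x_r\tfrac{\partial}{\partial x_r}$ described in the introduction for a torus, which has a zero of order $1$ along every torus-invariant divisor of the toric variety $Z$. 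Hence $a_j = 1$.

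The only real obstacle is making the last reduction precise: one must check that the anticanonical sheaf of $X^0 \cong R_u(P)\times Z$ decomposes as an external tensor product $\widecheck\omega_{R_u(P)} \boxtimes \widecheck\omega_Z$ compatibly with the decomposition of $s$ as $\bigl(\bigwedge_\alpha \uf_\alpha\bigr)\wedge \lf_1\wedge\ldots\wedge\lf_r$, and that passing through the finite covering $B_0 \to B\cdot x_0$ and extending from $B\cdot x_0$ across $X^0$ preserves the order of vanishing $1$ along $X_j$. Both are straightforward: étale-locally (a finite covering that is unramified in codimension one near the generic point of $X_j$, since $T_0\cap T_1$ is finite) orders of vanishing are preserved, and the product structure of the tangent sheaf of a product variety is standard. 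With $a_j=1$ established and $m_i\in\Z_{>0}$ already in hand from the Proposition together with effectivity, the formula $\Div s = \sum_{i=1}^k m_i D_i + \sum_{j=1}^n X_j$ with $m_i\in\Z_{>0}$ follows, which is the assertion of the corollary.
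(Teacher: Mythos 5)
Your proposal is correct and follows essentially the same route as the paper's proof: use the Proposition to get that $\Div s$ is an effective combination of the $D_i$ and $X_j$ with positive coefficients, restrict to $X^0 \cong R_u(P)\times Z$ via the local structure theorem, decompose $s|_{X^0}$ as $s_1\otimes s_2$ under $\widecheck\omega_{X^0}\cong\pi_1^*\widecheck\omega_{R_u(P)}\otimes\pi_2^*\widecheck\omega_Z$, note $s_1$ is nowhere vanishing, and then read off the coefficient $1$ along each $X_j$ from the torus-invariance of $s_2$ on the toric variety $Z$. One small clarification: the parenthetical about the finite covering $B_0\to B\cdot x_0$ is not needed to justify the product decomposition $X^0\cong R_u(P)\times Z$ (which is a genuine isomorphism, not a covering); the only place a finite covering is relevant is that the one-parameter subgroups $\lambda_i$ lie in $T_1$ rather than in the quotient torus $T_1/(T_0\cap T_1)$ acting faithfully on $Z$, but since the Lie algebras agree the restricted vector fields $\lf_i|_Z$ still give the standard invariant section of $\widecheck\omega_Z$, exactly as in the paper.
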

\begin{proof}
  Since $s$ is a $B$-semi-invariant section, it follows that its
  divisor is a linear combination of the $B$-invariant divisors of
  $X$, \ie\
  \[
  \Div s=\sum_{i=1}^km_iD_i+\sum_{j=1}^nr_jX_j
  \]
  for integers $m_i$ and $r_j$. Since $s$ vanishes on every
  $B$-invariant divisor, it follows that the $m_i$ and $r_j$ are
  positive. To show that $r_j=1$, we consider the restriction
  $s'\coloneqq s|_{X^0}$. Above, we have seen that the open subset
  $X^0$ is isomorphic to the product variety $R_u(P)\times Z$. In
  particular,
  $\widecheck\omega_{X^0}\cong\pi_1^*\widecheck\omega_{R_u(P)}\otimes\pi_2^*\widecheck\omega_Z$
  where $\pi_i$ denotes the projection onto the $i$-th factor. The
  section $s'$ behaves well under this product decomposition. Indeed,
  set $s_1\coloneqq\bigwedge_{\alpha\in
    R^+\setminus\left<S^p\right>}\uf_\alpha|_{R_u(P)}$ and
  $s_2\coloneqq\lf_1|_Z\wedge\ldots\wedge\lf_r|_Z$. Then under the
  isomorphism above the section $s'$ corresponds to the section
  $s_1\otimes s_2$. Since $s_1$ does not vanish on $R_u(P)$, we obtain
  \[
  \sum_{j=1}^nr_jX_j= (\Div s)|_{X^0} = \Div s' =\Div(s_1\otimes
  s_2)=\pi_2^* \Div s_2.
  \]
  Now, $Z$ is a toric variety with respect to the quotient torus
  $T_1/(T_0\cap T_1)$, and the pullback under $\pi_2$ of its
  torus-invariant divisors are exactly the $G$-invariant divisors
  $X_j$ of $X$. Since $\sigma_2$ is invariant under the action of the
  torus $T_1/(T_0\cap T_1)$, it follows, by toric geometry, that
  $r_j=1$.
\end{proof}

\begin{proof}[{Proof of Theorem~\ref{th:a2}}]
  By Corollary \ref{cor:bo}, it follows that the $B$-semi-invariant
  section $s$ of Theorem~\ref{th:a1} has a zero of order $1$ along any
  $G$-invariant prime divisor in any spherical embedding
  $G/H\hookrightarrow X$.
 
  Now assume that $s'$ is a generator of
  $\Gamma(U,\widecheck{\omega}_{G/H})$ which has a zero of order $1$
  along any $G$-invariant prime divisor in any spherical embedding
  $G/H\hookrightarrow X$. Since the open $B$-orbit $U$ of $G/H$ has
  trivial divisor class group, we have maps $\widecheck\omega_U \to
  \Om_U(\chi) \to \Om_U$, where the first map is chosen to be a
  $B$-equivariant isomorphism, and the second map is canonical, but
  not necessarily $B$-equivariant. Under this isomorphism $s$ and $s'$
  correspond to functions $f,f'\in\Gamma(U,\Om_{G/H})$. Since $s$ and
  $s'$ are generators, the functions $f$ and $f'$ are invertible. In
  particular, $f/f'\in\Gamma(U,\Om_{G/H}^*)$. By \cite[Proposition
  1.3, (ii)]{knoppic}, $f/f'$ is $B$-semi-invariant and hence its
  $B$-weight is contained in $\Mm$.
  
  Take any spherical embedding $G/H\hookrightarrow X$ which contains
  exactly two orbits, namely $G/H$ and a $G$-invariant prime divisor
  $X_1$.  The colored fan corresponding to $X$ is given by the ray
  $\Q_{\ge0}\nu_1$ where $\nu_1\in\Nm$ denotes the $G$-invariant
  valuation induced by $X_1$. Since $s$ and $s'$ have a zero of order
  $1$ along $X_1$, we obtain $\langle f/f',\nu_1\rangle=0$.
  
  It follows that the valuation cone $\Vm$ of $G/H$ is contained in
  the subspace $\{w \in \Nm_\Q : \langle f/f', w \rangle = 0
  \}$. Since $\Vm$ is a full-dimensional cone, this is only possible
  if the $B$-weight of $f/f'$ is $0$, \ie $f$ and $f'$ coincide up to
  a scalar multiple.
\end{proof}

\section{Types of colors}
\label{sec:typcol}

In \cite[4.2]{Brion:cc}, Brion has defined colors of types $\II$,
$\III$, and $\IV$. These types are equivalent to the types $b$, $a$,
and $2a$ due to Luna respectively, which means that
Theorem~\ref{th:ac} is equivalent to \cite[Theorem~4.2]{Brion:cc}. We
will, however, prove Theorem~\ref{th:ac} by reducing it to the
situation of \cite[3.6]{Luna:cc}, \ie to the case of a wonderful
variety (and afterwards it will be easy to see that the types of colors
due to Brion and Luna coincide).

A \emph{wonderful variety} is a spherical variety which is complete,
smooth, simple, and toroidal. We explain (from~\cite[6.1]{Luna:typea})
how to associate to the spherical variety $X$ a wonderful variety $Y$.
We identify the $G$-equivariant automorphism group of $G/H$ with
$N_G(H)/H$. Then $N_G(H)$ acts on $\Dm$, and we define $\overline{H}
\subseteq N_G(H)$ to be the kernel of this action. It contains $H$ and
is called the \emph{spherical closure} of $H$. There exists a (unique)
spherical embedding $G / \overline{H} \hookrightarrow Y$ such that $Y$
is a wonderful variety. We denote its set of colors by $\overline{\Dm}
= \{ \overline{D}_1, \dots, \overline{D}_k\}$, which is in bijection
with the set of colors $\Dm = \{D_1, \ldots, D_k\}$ of $G/H$ via
$\pi\colon G/H \to G/\overline{H}$.  We denote the stabilizer of the
open $B$-orbit in $Y$ by $\overline{P}$ and the coefficients of the
colors in the expression for the anticanonical divisor from
Theorem~\ref{th:a1} by $\overline{m}_1, \ldots, \overline{m}_k$.

\begin{theorem}[{\cite[3.6]{Luna:cc}}]
  We have
  \begin{align*}
    \overline{m}_i &= r(\overline{D}_i) \coloneqq \tfrac{1}{2} \langle \alpha^\vee,
    \kappa_{\overline{P}} \rangle = 1
    &&\text{ for $\overline{D}_i$ of type $a$ or $2a$,}\\
    \overline{m}_i &= r(\overline{D}_i) \coloneqq \langle \alpha^\vee, \kappa_{\overline{P}}
    \rangle \ge 2 && \text{ for $\overline{D}_i$ of type $b$.}
  \end{align*}
\end{theorem}
\begin{proof}
  By \cite[Proposition~3.6(2)]{Luna:cc}, an anticanonical divisor of
  $Y$ is given by $\sum_{i=1}^k r(\overline{D}_i) \overline{D}_i +
  \sum_{j=1}^{l} Y_j$ where $Y_1, \ldots, Y_{l}$ are the $G$-invariant
  prime divisors in $Y$. The result now follows from the fact that
  $\overline{\Dm}$ is a basis of $\Pic(Y) = \Cl(Y)$.
\end{proof}

\begin{proof}[Proof of Theorem~\ref{th:ac}]
  Let $s \in \Gamma(G/H, \check{\omega}_{G/H})$ and $\overline{s} \in
  \Gamma(G/\overline{H}, \check{\omega}_{\smash{G/\overline{H}}})$ be
  $B$-semi-invariant sections of weights $\kappa_P$ and
  $\kappa_{\overline{P}}$ respectively.  As $\overline{P} = P$, we
  have $\pi^*\overline{s} = s$ (up to a constant), and hence, for the
  pullback of Cartier divisors, $\pi^*\Div \overline{s} = \Div s$.  On
  the other hand, we have the pullback of Cartier divisors
  $\pi^*\overline{D}_i = D_i$ (apply \cite[Section~2.2,
  Theorem~2.2]{foschi}, see also \cite[Lemma~30.24]{ti}), hence $m_i =
  \overline{m}_i$.
\end{proof}

\begin{remark}
  \label{rem:gssc}
  Replacing $G$ with a finite cover, we may assume
  $G = G^{ss} \times C$ where $G^{ss}$ is semisimple
  simply-connected and $C$ is a torus. Then, by
  \cite[Proposition 2.4 and Remark after it]{KKLV},
  every invertible sheaf on the normal variety $X$ can
  be $G$-linearized. 
  There exist (unique) $G$-linearizations of the invertible
  sheaves $\Om_X(D_i)$ and $\Om_X(X_j)$ such that their canonical
  sections are $C$-invariant. With these linearizations,
  Brion's description 
  of the anticanonical sheaf
  \begin{align*}
    \label{eq:1}
    \check{\omega}_X = \Om_X( D_1 )^{\otimes m_1} \otimes \ldots \otimes
    \Om_X( D_k )^{\otimes m_k} \otimes \Om_X( X_1 ) \otimes \ldots \otimes
    \Om_X( X_n )
  \end{align*}
  of an arbitrary spherical variety $X$ is not only valid inside $\Pic
  ( X_\mathrm{reg} )$, but even inside the group of isomorphism
  classes of $G$-linearized invertible sheaves $\Pic^G( X_\mathrm{reg}
  )$.
\end{remark}

\begin{prop}
  The types of colors $\II, \III$, and $\IV$ due to Brion coincide
  with the types of colors $b$, $a$, and $2a$ due to Luna
  respectively.
\end{prop}
\begin{proof}
  A color $D_i\in \Dm$ is of type $\II$ if and only if it is of type
  $b$ as both situations are characterized by $m_i \ge 2$. Now let $D
  \in \Dm$ be of type $\III$ (resp.~of type $\IV$), and let $C$ be a
  basic curve representing $D$ in the sense of
  \cite[4.2]{Brion:cc}. The kernel of the $B$-action on $C$ is the
  radical of a (unique) minimal parabolic subgroup $P_\alpha$ (see
  \cite[1.1]{Brion:cc}), and we have $D \in \Dm(\alpha)$ (see
  \cite[Proposition~3.6]{sphmori}). According to
  \cite[Proposition~1.2]{Brion:cc}, the spherical variety $G\cdot C$
  contains a color of type $a$ (resp.~of type $2a$) which is moved by
  $P_\alpha$. By \cite[Theorem~1.1]{gh14}, this is only possible if
  $\alpha \in \Sigma$ (resp.~if $2\alpha \in \Sigma$), \ie $D$ is of
  type $a$ (resp.~of type $2a$).
\end{proof}

Finally, it will be helpful to explain one further approach due to Knop
to characterize the types of colors, which is easier to apply in
certain situations (see the examples in Section~\ref{section:aex}). We
fix a point $x_0 \in U$ in the open $B$-orbit. Let $\alpha \in S$ such
that $\Dm(\alpha) \ne \emptyset$.  We have $P_\alpha/B \cong \Pb^1$
and the natural action of $P_\alpha$ on $\Pb^1$ yields a morphism
$\phi_\alpha \colon P_\alpha \to \PGL_2$. Let $H_\alpha$ be the
stabilizer of $x_0$ inside $P_\alpha$. Then $\phi_\alpha(H_\alpha)$ is
a proper spherical subgroup of $\PGL_2$ (see \cite[Lemma
3.1]{Knop:Borbits}), \ie $\phi_\alpha(H_\alpha)$ is either a maximal
torus, the normalizer of a maximal torus, or contains a maximal
unipotent subgroup (see \cite[Lemma 3.2]{Knop:Borbits}).

\begin{theorem}[{\cite[Section~2]{Knop:loc}}]
  \label{th:atypes}
  A color $D \in \Dm(\alpha)$ is
  \begin{align*}
    &\text{of type $a$ } && \text{ if $\Phi_\alpha(H_\alpha)$ is a
      maximal torus,}\\
    &\text{of type $2a$ } && \text{ if $\Phi_\alpha ( H_\alpha )$ is
      the normalizer of a maximal torus,}\\
    &\text{of type $b$ } && \text{ if $\Phi_\alpha(H_\alpha)$ contains
      a maximal unipotent subgroup.}
  \end{align*}
\end{theorem}

\section{Examples illustrating Theorem~\ref{th:ac}}
\label{section:aex}
In this section, we compute the coefficients $m_i$ for several
well-known spherical homogeneous spaces.  In \ref{ex:ac1}, \ref{ex:ac2},
and \ref{ex:ac3} we give
examples for every type of color. Example~\ref{ex:ac4} illustrates $S^p \ne
\emptyset$ and $m_i > 2$. Observe that in \ref{ex:ac1}, \ref{ex:ac3},
and \ref{ex:ac4} the
computation is simplified by applying Theorem~\ref{th:atypes}.

\begin{example}
  \label{ex:ac1}
  Consider $G \coloneqq \SL_n \times \SL_n$ and $H \coloneqq \SL_n
  \subseteq G$ diagonally embedded. Then $G/H$ is isomorphic to
  $\SL_n$ where $G$ acts with the first factor from the left and with
  the second factor from the right after inverting.  We denote by
  $\operatorname{D}_n \subseteq \SL_n$ the subgroup of diagonal
  matrices, by $\operatorname{T}_n \subseteq \SL_n$ the subgroup of
  upper triangular matrices, and by $\operatorname{T}_n^- \subseteq
  \SL_n$ the subgroup of lower triangular matrices.  We define $B
  \coloneqq \operatorname{T}_n^- \times \operatorname{T}_n$ and $T
  \coloneqq \operatorname{D}_n \times \operatorname{D}_n$, and
  obtain the set of simple roots $S = \{\alpha_1, \ldots,
  \alpha_{n-1}, \beta_1, \ldots, \beta_{n-1}\}$.

  The homogeneous space $G/H$ is spherical, and there are $n-1$ colors
  of $G/H$, i.e. $\Dm = \{D_1, \ldots, D_{n-1}\}$, given by $D_i =
  \V(f_i)$ where $f_i \in \C[\SL_n]$ is the upper-left principal minor
  of size $i \times i$.  It is not difficult to see that
  $\Dm(\alpha_i) = \Dm(\beta_i) = \{D_i\}$.  Furthermore, all colors
  are of type $b$ since for every $\alpha_i$ the image of $H \cap
  P_{\alpha_i}$ under $\Phi_{\alpha_i} \colon P_{\alpha_i} \to \PGL_2$
  is a Borel subgroup of $\PGL_2$ and therefore contains a maximal
  unipotent subgroup.

  The stabilizer of the open $B$-orbit in $G/H$ is $B$
  itself. Therefore we have $S^p = \emptyset$ and $\kappa_P =
  2\rho_S$. Since $\rho_S$ is equal to the sum of the fundamental
  dominant weights, we obtain $m_i = \langle \alpha_i^\vee,
  2\rho_S\rangle = 2$ for $1\le i \le n-1$.
\end{example}

\begin{example}
  \label{ex:ac2}
  Consider $G\coloneqq\SL_2\times\SL_2\times\SL_2$ and
  $H\coloneqq\SL_2\subseteq G$ diagonally embedded. We denote by $B
  \subseteq G$ the Borel subgroup of lower triangular matrices and by
  $T \subseteq B$ the maximal torus of diagonal matrices. We obtain
  the set of simple roots $\{\alpha,\beta,\gamma\}$ corresponding to
  the three factors in $G$.
 
  The homogeneous space $G/H$ is spherical, and there are $3$ colors
  of $G/H$, \ie $\Dm=\{D_{12},D_{13},D_{23}\}$. To be more precise,
  let $A_{ij}$ for $1\le i<j\le 3$ be the $2\times 2$ matrix whose
  rows are given by the first rows of the $i$-th and $j$-th factor of
  $G$. Then $\det A_{ij}$ is an equation for $D_{ij}$. It is not
  difficult to see that $\Dm(\alpha)=\{D_{12}$, $D_{13}\}$,
  $\Dm(\beta)=\{D_{12},D_{23}\}$, and
  $\Dm(\gamma)=\{D_{13},D_{23}\}$. In particular, it follows that all
  colors are of type $a$, and therefore $m_{ij}=1$ for $1 \le i < j
  \le 3$.

  By the definition of the types of colors due to Luna (see
  Section~\ref{sec:intr}), it follows that $\alpha, \beta, \gamma$ are
  contained in $\Mm$, and the valuation cone is given by
  \begin{align*}
    \Vm=\{v\in\Nm_\Q : \langle v, \alpha \rangle \le 0, \langle v,
    \beta \rangle \le 0 , \langle v, \gamma \rangle \le 0\}\text{.}
  \end{align*}
\end{example}

\begin{example}
  \label{ex:ac3}
  Consider $G \coloneqq \SL_n$ for $n \ge 3$ and $H\coloneqq \SO_n$,
  the subgroup of orthogonal matrices.  Let $G$ act on the space
  $\Sym(n)$ of symmetric $n\times n$ matrices via $A \cdot M =
  AMA^T$. Then the stabilizer of the identity matrix is $H$.  We
  denote by $B \subseteq G$ the Borel subgroup of lower triangular
  matrices and by $T \subseteq B$ the maximal torus of diagonal
  matrices. We obtain the set of simple roots $\{\alpha_1, \ldots,
  \alpha_{n-1}\}$.

  The homogeneous space $G/H$ is spherical, and there are $n-1$ colors
  of $G/H$, i.e. $\Dm = \{D_1, \ldots, D_{n-1}\}$, given by $D_i =
  \V(f_i)$ where $f_i \in \C[\SL_n]$ is again the upper-left principal
  minor of size $i \times i$.  It is not difficult to see that
  $\Dm(\alpha_i) = \{D_i\}$.  Furthermore, all colors are of type $2a$
  since for every $\alpha_i$ the image of $H \cap P_{\alpha_i}$ under
  $\Phi_{\alpha_i} \colon P_{\alpha_i} \to \PGL_2$ is the normalizer
  of a maximal torus.  It follows that $m_i = 1$ for $1 \le i \le
  n-1$.

  As in the previous example, it follows that $2\alpha_i$ is contained
  in $\Mm$ for $1 \le i\le n-1$, and the valuation cone is given by
  \begin{align*}
    \Vm=\{v\in\Nm_\Q : \langle v, 2\alpha_i \rangle\le0 \text{ for
      every } 1 \le i \le n-1\}\text{.}
  \end{align*}
\end{example}

\begin{example}
  \label{ex:ac4}
  Consider $G \coloneqq \SL_n$ for $n \ge 3$ and $H \coloneqq
  \SL_{n-1}$ embedded as the block diagonal matrices with entries on
  the lower-right of $\SL_n$. Let $B \subseteq G$ be the Borel
  subgroup of upper triangular matrices and $T \subseteq G$ the
  subgroup of diagonal matrices. We obtain the set of simple roots $S
  = \{\alpha_1, \ldots, \alpha_{n-1}\}$.

  Let $G$ act on $\C^n \times \C^n$ by acting naturally on the first
  factor and with the contragredient action on the second
  factor. Denoting the coordinates of the first factor by $X_1,
  \ldots, X_n$ and the coordinates of the second factor by $Y_1,
  \ldots Y_n$, we obtain
  \begin{align*}
    G/H \cong \V(X_1Y_1 + \ldots + X_nY_n - 1) \subseteq \C^n \times
    \C^n\text{.}
  \end{align*}
  There are two colors $D_1 \coloneqq \V(X_n)$ and $D_2 \coloneqq
  \V(Y_1)$.  It is not difficult to see that $\Dm(\alpha_{n-1}) =
  \{D_1\}$, $\Dm(\alpha_{1}) = \{D_2\}$, and $S^p = \{\alpha_2,
  \ldots, \alpha_{n-2}\}$.  We see that both colors are of type $b$ as
  in the previous example, but we have to suitably conjugate $H$ first
  since the basepoint does not lie in the open $B$-orbit. We obtain
  \begin{align*}
    m_1 = \langle \alpha_{n-1}^\vee, 2\rho_S - 2\rho_{S^p}\rangle =
    \rleft\langle \alpha_{n-1}^\vee,
    \sum_{i=1}^{n-1}\sum_{j=1}^i\alpha_j+\sum_{i=1}^{n-2}\sum_{j=1}^i\alpha_{n-j}\rright\rangle
    =n-1\text{,}
  \end{align*}
  and similarly $m_2 = n-1$.
\end{example}

\section{$\Q$-Gorenstein spherical Fano varieties}
\label{sec:spherical-q-fano}

We continue to use the notation from the introduction.
In particular, $G/H$ is a spherical homogeneous space,
$\Dm = \{D_1, \ldots, D_k \}$ is the set of colors, and $m_i \in \Z_{>0}$ are
the coefficients of the colors in the expression for the anticanonical
divisor from Theorem~\ref{th:a1}.

\begin{definition}
  \label{def:qrefl}
  A polytope $Q \subseteq \Nm_\Q$ is called
  \emph{$\Q$-$G/H$-reflexive} if the following conditions are
  satisfied:
  \begin{enumerate}
  \item $\rho( D_i ) / m_i \in Q$ for every $i = 1, \ldots, k$.
  \item $0 \in \topint(Q)$.
  \item Every vertex of $Q$ is contained in $\{ \rho( D_i ) / m_i : i
    = 1, \ldots, k \}$ or a primitive element in $\Nm \cap \Vm$.
  \end{enumerate}
\end{definition}

\begin{prop}
  \label{prop:A_X_Q_G_mod_H_relfexive}
  Let $G/H \hookrightarrow X$ be a $\Q$-Gorenstein spherical Fano
  embedding. Then the polytope $Q_X \subseteq \Nm_\Q$ is
  $\Q$-$G/H$-reflexive.
\end{prop}
\begin{proof}
  Let $X_1,\ldots, X_n$ be the $G$-invariant prime divisors in $X$.
  It follows from the completeness of $X$ that
  \begin{align*}
    \cone( \rho( D_1 ), \ldots, \rho( D_k ), \nu_{X_1}, \ldots,
    \nu_{X_n} ) = \Nm_\Q\text{,}
  \end{align*}
  and therefore $0 \in \topint(Q_X)$.
\end{proof}

\begin{prop}
  \label{prop:X_associated_A_Q_Fano}
  Let $Q\subseteq\Nm_\Q$ be a $\Q$-$G/H$-reflexive polytope. Then
  $\Ff_{\Supp}(Q)$ is a complete supported colored fan such that the
  associated spherical embedding $G/H\hookrightarrow
  X_{\Ff_{\Supp}(Q)}$ is $\Q$-Gorenstein Fano.
\end{prop}
\begin{proof}
  As $Q$ is full-dimensional, $\Ff( Q )$ is complete (see
  Definition~\ref{def:colored_face_fan}), and therefore, by
  Remark~\ref{rem:s_maps_complete_to_complete}, $\Ff_{\Supp}( Q )$ is
  complete as well. Let $X \coloneqq X_{\Ff_{\Supp}(Q)}$. Every
  maximal cone $( \Cm, \Fm ) \in \Ff_{\Supp} ( Q )$ is given by $\Cm =
  \cone ( \widehat{v_\Cm} )$ and $\Fm = \rho^{-1} ( \widehat{v_\Cm} )
  $ for a vertex $v_\Cm \in Q^*$. We define a piecewise linear
  function $\psi \colon \Vm \to \Q$ by $\psi|_{\Cm\cap \Vm} \coloneqq
  - \langle \cdot, v_\Cm \rangle$.  It is straightforward to check
  that $\psi$ is the piecewise linear function corresponding to the
  $\Q$-Cartier divisor $-K_X$.  As $Q$ is a polytope, the piecewise
  linear function $\psi$ is strictly convex. Together with property
  (1) of Definition~\ref{def:qrefl}, it follows from
  Proposition~\ref{prop:ample} that $-K_X$ is ample, and hence
  $X=X_{\Ff_{\Supp}(Q)}$ is Fano.
\end{proof}

\begin{prop}
  The assignments $X \mapsto Q_X$ and $Q \mapsto X_{\Ff_{\Supp}( Q )}$
  define a bijection between isomorphism classes of
  $\Q$-Gorenstein spherical Fano embeddings of $G/H$ and $\Q$-$G/H$-reflexive
  polytopes.
\end{prop}
\begin{proof}
  The well-definedness of the two maps follows from
  Proposition~\ref{prop:A_X_Q_G_mod_H_relfexive} and
  Proposition~\ref{prop:X_associated_A_Q_Fano}. It remains to show
  that the maps are inverse to each other.
  
  Let $Q \subseteq \Nm_\Q$ be a $\Q$-$G/H$-reflexive polytope and $G/H
  \hookrightarrow X$ the spherical embedding corresponding to the
  colored fan $\Ff_{\Supp} ( Q )$. Let $u \in Q$ be a vertex. By
  property (3) of Definition~\ref{def:qrefl}, we have $u \in Q_X$,
  hence $Q \subseteq Q_X$.  Now, let $u \in Q_X$ be a vertex. If $u =
  \tfrac{\rho(D_i)}{m_i}$ for some $i \in \{1, \ldots, k\}$, then $u
  \in Q$ because of property (1) of
  Definition~\ref{def:qrefl}. Otherwise, the vertex $u$ is the
  primitive generator of a ray in $\Ff_{\Supp}(Q)$, in which case the
  definition of the face fan also implies $u \in Q$. Hence we have
  $Q_X \subseteq Q$.

  Let $G/H \hookrightarrow X$ be a $\Q$-Gorenstein spherical Fano
  embedding with associated supported colored fan $\Ff$ and $\psi
  \colon \Vm \to \Q$ the strictly convex piecewise linear function
  associated to the anticanonical divisor $-K_X$.  It suffices to show
  that the maximal cones of $\Ff_{\Supp} ( Q_X )$ and $\Ff$
  coincide. Let $(\Cm, \Fm)$ be a maximal cone in $\Ff$. Then
  $\psi|_{\Cm\cap \Vm} = -\langle \cdot, v_\Cm \rangle$ for some
  $v_\Cm \in \Mm_\Q$.  We obtain that $F \coloneqq \{ u \in Q_X :
  \langle u, v \rangle = -1 \}$ is a facet of $Q_X$ such that $\cone(
  F ) = \Cm$ and $\Fm = \rho^{-1}( F )$. In particular, it follows
  that all maximal cones of $\Ff$ are in $\Ff_{\Supp}( Q_X )$.
\end{proof}

\section{Gorenstein spherical Fano varieties}
\label{sec:spher-gorenst-fano}

Let $Q \subseteq \Nm_\Q$ be a $\Q$-$G/H$-reflexive polytope. In this
section, we investigate when the associated $\Q$-Gorenstein
spherical Fano embedding $G/H \hookrightarrow X$ is Gorenstein, \ie the
anticanonical divisor is Cartier. Recall that the vertices of $Q^*$
correspond to the colored cones of maximal dimension in $\Ff(Q)$.

\begin{definition}
  A vertex $v \in Q^*$ is called \emph{supported} if the corresponding
  colored cone in $\Ff(Q)$ is supported. In this case, the
  corresponding facet $\widehat{v} \preceq Q$ is called
  \emph{supported} as well.  The set of supported vertices of $Q^*$ is
  denoted by $V_{\Supp}( Q^* )$.
\end{definition}

\begin{lemma}
  \label{le:msupp}
  Let $\Cm \subseteq \Nm_\Q$ be a cone of maximal dimension. Then the
  following statements are equivalent:
  \begin{enumerate}
  \item $\relint(\Cm) \cap \Vm = \emptyset$.
  \item There exists $v \in \Mm_\Q$ such that $\langle \cdot,
    v\rangle|_{\Cm} \ge 0$ and $\langle \cdot, v \rangle|_{\Vm} \le
    0$.
  \item $\cone( \Sigma ) \cap \Cm^\vee \neq \{ 0 \}$.
  \end{enumerate}
\end{lemma}
\begin{proof}
  (1) $\Rightarrow$ (2) follows from the Hahn-Banach separation
  theorem since $\relint(\Cm)$ is open in $\Nm_\Q$, and (2)
  $\Rightarrow$ (3) is obvious. In order to show (3) $\Rightarrow$
  (1), let $0 \ne v \in \cone( \Sigma ) \cap \Cm^\vee$ and $u \in
  \relint(\Cm)$. As $\Cm$ is full-dimensional, \ie $\Cm^\perp = \{ 0
  \}$, we obtain $\langle u, v \rangle > 0$. It follows that $u \notin
  \Vm$.
\end{proof}

\begin{prop}
  \label{prop:suppvert}
  A vertex $v \in Q^*$ is supported if and only if $Q^* \cap (v +
  \cone( \Sigma )) = \{v\}$.
\end{prop}
\begin{proof}
  Let $v \in Q^*$ be a vertex and $\Cm \coloneqq \cone ( \widehat{v} )
  \subseteq \Nm_\Q$ the corresponding cone in $\Ff(Q)$.  We denote by
  \begin{align*}
    \Ts_vQ^* \coloneqq \{ v + \lambda \cdot ( v' - v) : v' \in Q^*,
    \lambda \ge 0\}
  \end{align*}
  the tangent cone of $Q^*$ in $v$, which is an affine cone with apex
  $v$.  By \cite[Corollary~5.2.5]{FundamentalsConvexAnalysis}, we have
  $\Ts_vQ^* - v = \Cm^\vee$ since $\Cm$ is the normal cone of $Q^*$
  along the vertex $v$.  The statement now follows from
  Lemma~\ref{le:msupp} since $\Tm_v Q^* \cap (v + \cone( \Sigma )) =
  \{v\}$ if and only if $Q^* \cap (v + \cone( \Sigma )) = \{v\}$.  The
  last statement follows since any $v' \in \Tm_v Q^* \cap (v + \cone(
  \Sigma ))$ may be rescaled to be arbitrarily near to $v$.
\end{proof}

\begin{definition}
  The $\Q$-$G/H$-reflexive polytope $Q$ is called
  \emph{$G/H$-reflexive} if every supported vertex of $Q^*$ lies in
  the lattice $\Mm$.
\end{definition}

We will show in Proposition~\ref{prop:refag} that this definition is
in agreement with Definition~\ref{def:qghrefl}.

\begin{theorem}
  Recall that $X$ is a $\Q$-Gorenstein spherical Fano variety by
  assumption.
  The variety $X$ is Gorenstein if and only if the polytope $Q$ is
  $G/H$-reflexive. In particular, there is a bijection between
  isomorphism classes of Gorenstein spherical Fano embeddings of $G/H$
  and $G/H$-reflexive polytopes.
\end{theorem}
\begin{proof}
  Let $\psi \colon \Vm \to \Q$ be the piecewise linear function
  corresponding to the $\Q$-Cartier divisor $-K_X$.  If $\Cm_v
  \subseteq \Nm_\Q$ is the cone corresponding to the vertex $v \in
  Q^*$, we have $\psi|_{\Cm_v \cap \Vm} = -\langle \cdot, v\rangle$.
  Then $-K_X$ is Cartier if and only if $v \in \Mm$ for every $v$
  where $\Cm_v$ is in $\Ff_{\Supp}(Q)$, \ie if and only if the
  supported vertices of $Q^*$ lie in the lattice $\Mm$.
\end{proof}

\begin{prop}
  \label{prop:refag}
  A polytope $Q \subseteq \Nm_\Q$ is $G/H$-reflexive if and only if
  the following conditions are satisfied:
  \begin{enumerate}
  \item $\rho( D_i ) / m_i \in Q$ for every $i = 1, \ldots, k$.
  \item $0 \in \topint(Q)$.
  \item Every vertex of $Q$ is contained in $\{ \rho( D_i ) / m_i : i
    = 1, \ldots, k \}$ or $\Nm \cap \Vm$.
  \item Every supported vertex of $Q^*$ lies in the lattice $\Mm$.
  \end{enumerate}
\end{prop}
\begin{proof}
  Let $Q \subseteq \Nm_\Q$ be a full-dimensional polytope satisfying
  the above conditions and $u \in Q$ a vertex not contained in
  $\{\rho(D_i)/m_i : i=1, \ldots, k \}$. Then we have $u \in \Vm$,
  which implies that there exists a supported full-dimensional cone
  $\Cm$ in $\Ff(Q)$ having $\Q_{\ge 0}u$ as extremal ray. Let $v \in
  Q^*$ be the supported vertex corresponding to $\Cm$.  Then $\langle
  u, v \rangle = -1$, and, as $u \in \Nm$ and $v \in \Mm$, we obtain
  that $u$ is primitive.
\end{proof}

\section{Global sections of the anticanonical sheaf}
\label{sec:comp-with-moment}

In this section, we recall from \cite[3.3]{l-brion-pic}
the $G$-module structure of the space of
global sections of a $B$-invariant Cartier divisor on
 a spherical variety $X$. For simplicity, we assume that $X$
is complete. We then
investigate the special case of the anticanonical sheaf 
when $X$ is Gorenstein Fano. We use \cite[Section 17.4]{ti}
as a general reference.

Let $G/H \hookrightarrow X$ be an arbitrary spherical
embedding with associated supported colored fan $\Ff$.
We denote by $D_1, \ldots, D_k$ the colors and by $X_1, \ldots, X_n$
the $G$-invariant prime divisors in $X$.
Consider a $B$-invariant Cartier divisor
\begin{align*}
\delta \coloneqq \sum_{i=1}^k a_i D_i + \sum_{j=1}^n b_j X_j
\end{align*}
on $X$. For every maximal supported colored cone $(\Cm, \Fm) \in \Ff$
we write $v_\Cm \in \Mm$ as in the last part of
Section~\ref{sec:notat-gener}. As in Remark~\ref{rem:gssc}, we
may assume $G = G^{ss} \times C$ where $G^{ss}$ is semisimple
simply-connected and $C$ is a torus, and then the invertible
sheaf $\Om_X(\delta)$ can
be $G$-linearized.  Let $s_\delta$ be a rational section of $\Om_X(
\delta )$ satisfying $\Div s_\delta = \delta$. As $\delta$ is
$B$-invariant, the section $s_\delta$ is $B$-semi-invariant of some
weight $\kappa_\delta \in \X(B)$ (not necessarily contained in $\Mm$).
We write $\Ff_{\max}$ for the set of maximal cones of $\Ff$, and we
set $\Dm_X \coloneqq \bigcup_{(\Cm,\Fm) \in \Ff_{\max}}\Fm$.  If we
define
\begin{align*}
  P_\delta \coloneqq \rleft\{ u \in \bigcap_{(\Cm,\Fm) \in \Ff_{\max}} \rleft(
  -v_\Cm + \Cm^\vee \rright) : \langle \rho(D), u \rangle \ge -m_D
  \text{ for every } D \in \Dm \setminus \Dm_X\rright\}\text{,}
\end{align*}
then $( \kappa_\delta + P_\delta ) \cap \X(B)$ is contained in the set
of dominant weights, and we have
\begin{align*}
  \Gamma( X, \Om_X(\delta) ) \cong \bigoplus_{\chi \in ( \kappa_\delta
    + P_\delta ) \cap \X(B)} V_\chi
\end{align*}
where $V_\chi$ denotes the irreducible $G$-module of highest weight
$\chi$.

\begin{remark}
  Let $G/H \hookrightarrow X$ be a Gorenstein spherical Fano embedding
  with associated $G/H$-reflexive polytope $Q$, and let $\delta$ be
  the anticanonical divisor of Theorem \ref{th:a1}
  equipped with the canonical $G$-linearization.
  It is straightforward to check that
  we have $\kappa_\delta = \kappa_P$ and $P_\delta = Q^*$.
\end{remark}

\section{Examples illustrating Theorem~\ref{thm:bijection-Fano-poly}}
\label{sec:ex-poly}

\begin{example}
  \label{ex:SL_2_mod_N}
  Let $G \coloneqq \SL_2 \times \C^*$ and consider $H \coloneqq N
  \times \{1\}$ where $N \subseteq \SL_2$ is the normalizer of a
  maximal torus. Fix some maximal torus contained in some Borel subgroup,
  and denote by $\alpha$
  the unique simple root of $\SL_2$. Denote by $\varepsilon$ a primitive
  character of $\C^*$.  Then there is exactly one spherical root
  $\gamma \coloneqq 2\alpha$ and $(\gamma, \varepsilon)$ is a basis of
  the lattice $\Mm$.  We denote by $(\gamma^*, \varepsilon^*)$ the
  corresponding dual basis of the lattice $\Nm$.  There is exactly one
  color $D_1$ (of type $2a$) with $\rho(D_1) = 2\gamma^*$.  Then $Q
  \coloneqq \conv(2\gamma^*, \varepsilon^*, -\gamma^*, -\varepsilon^*)
  \subseteq \Nm_\Q$ is a $G/H$-reflexive polytope, and its dual
  polytope is $Q^* = \conv(\gamma-\varepsilon, \gamma+\varepsilon,
  -\tfrac{1}{2}\gamma+\varepsilon, -\tfrac{1}{2}\gamma-\varepsilon)$.
  The polytopes $Q$ and $Q^*$ are illustrated in
  Figure~\ref{fig:sl2n}.  The valuation cone is shown in grey, and the
  dashed arrow is the image of the color under $\rho$ in $\Nm$.  The
  dotted arrows are translates of the spherical root $\gamma \in \Mm$
  showing that exactly the circled vertices of $Q^*$ are supported
  (see~Proposition~\ref{prop:suppvert}).
  \begin{figure}[ht!]
    \begin{tikzpicture}[scale=0.6]
      \clip (-4.04, -3.04) -- (4.04, -3.04) -- (4.04, 3.04) -- (-4.04, 3.04) -- cycle;
      \fill[color=gray!30] (0, -5) -- (0, 5) -- (-5, 5) -- (-5, -5) -- cycle;
      \draw[draw=black] ( 2, 0 ) -- ( 0, 1 ) -- ( -1, 0 ) -- ( 0, -1 ) -- cycle;
      \foreach \x in {-4,...,4} \foreach \y in {-4,...,4} \fill (\x, \y) circle (1pt);
      \node at (-0.4,0) {$Q$};
      \draw[dashed,-latex] (0, 0) -- (2, 0);
      \node[right,fill=white] at (2,0) {\tiny$\rho(D_1)$};
    \end{tikzpicture}\hspace*{1cm}
    \begin{tikzpicture}[scale=0.6]
      \clip (-4.04, -3.04) -- (4.04, -3.04) -- (4.04, 3.04) -- (-4.04, 3.04) -- cycle;
      \draw[draw=black] ( -0.5, -1 ) -- ( 1, -1 ) -- ( 1, 1 ) -- ( -0.5, 1 ) -- cycle;
      \foreach \x in {-4,...,4} \foreach \y in {-4,...,4} \fill (\x, \y) circle (1pt);
      \draw (1, 1) circle (3pt);
      \draw (1, -1) circle (3pt);
      \draw[densely dotted,-latex] (1, 1) -- (2, 1);
      \draw[densely dotted,-latex] (1, -1) -- (2, -1);
      \node at (0.5,0) {$Q^*$};
    \end{tikzpicture}
    \caption{Illustration to Example~\ref{ex:SL_2_mod_N}.}
    \label{fig:sl2n}
  \end{figure}
\end{example}

\begin{example}
  \label{ex:r2}
  Let $G \coloneqq \Spin_5 \times \Spin_5$. 
  Fix some maximal torus contained in some Borel subgroup,
  and denote by $\alpha_1, \alpha_2$ (resp.~$\alpha'_1, \alpha'_2$)
  the simple roots of the first (resp.~the second) simple factor $\Spin_5$
  where $\alpha_2$ (resp.~$\alpha_2'$) is the shorter root.
  According to the third entry in
  \cite[Table~B]{r2}, there exists a spherical homogeneous space $G/H$
  with spherical roots $\gamma_1 \coloneqq \alpha_2+\alpha'_2$ and
  $\gamma_2 \coloneqq \alpha_1+\alpha'_1$, such that $(\gamma_1,
  \gamma_2)$ is a basis of the lattice $\Mm$. We denote by
  $(\gamma^*_1, \gamma^*_2)$ the corresponding dual basis of the
  lattice $\Nm$.  We have $S^p = \emptyset$, and there are exactly two
  colors $D_1$, $D_2$ (both of type $b$) with $\rho(D_1) = -\gamma^*_1
  + 2\gamma^*_2 $ and $\rho(D_2) = 2\gamma^*_1 -2\gamma^*_2$.  As $S^p
  = \emptyset$, we have $\kappa_P = 2\rho_S$ (see
  Remark~\ref{rem:kphs}), so that the coefficients in the expression
  for the canonical divisor are $m_1 = m_2 = 2$. Then $Q \coloneqq
  \conv(\gamma^*_1-\gamma^*_2, -\tfrac{1}{2}\gamma^*_1+\gamma^*_2,
  -\gamma^*_1, -\gamma^*_2)$ is a $G/H$-reflexive polytope, and its
  dual polytope is $Q^* = \conv(\gamma_1-\tfrac{1}{2}\gamma_2,
  \gamma_1+\gamma_2, \gamma_2, -4\gamma_1-3\gamma_2)$.  The polytopes
  $Q$ and $Q^*$ are illustrated in Figure~\ref{fig:spin5}.  The
  valuation cone is shown in grey, and the dashed arrows are the
  images of the colors under $\rho$ in $\Nm$.  The dotted arrows are
  translates of the spherical roots $\gamma_1, \gamma_2 \in \Mm$
  showing that exactly the circled vertex of $Q^*$ is supported
  (see~Proposition~\ref{prop:suppvert}).

  \begin{figure}[ht!]
    \begin{tikzpicture}[scale=0.6]
      \clip (-4.04, -3.04) -- (4.04, -3.04) -- (4.04, 3.04) -- (-4.04, 3.04) -- cycle;
      \fill[color=gray!30] (0, -5) -- (0, 0) -- (-5, 0) -- (-5, -5) -- cycle;
      \draw[draw=black] ( -0.5, 1 ) -- ( 1, -1 ) -- ( 0, -1 ) -- ( -1, 0 ) -- cycle;
      \foreach \x in {-4,...,4} \foreach \y in {-4,...,4} \fill (\x, \y) circle (1pt);
      \draw[dashed,-latex] (0, 0) -- (-1, 2);
      \draw[dashed,-latex] (0, 0) -- (2, -2);
      \node at (-0.4,0) {$Q$};
      \node[above] at (-1,2) {\tiny$\rho(D_1)$};
      \node[below] at (2,-2) {\tiny$\rho(D_2)$};
    \end{tikzpicture}\hspace*{1cm}
    \begin{tikzpicture}[scale=0.6]
      \clip (-4.04, -3.04) -- (4.04, -3.04) -- (4.04, 3.04) -- (-4.04, 3.04) -- cycle;
      \draw[draw=black] ( 1, -0.5 ) -- ( 1, 1 ) -- ( 0, 1 ) -- ( -4, -3 ) -- cycle;
      \foreach \x in {-4,...,4} \foreach \y in {-4,...,4} \fill (\x, \y) circle (1pt);
      \draw (1, 1) circle (3pt);
      \draw[densely dotted,->] (1, 1) -- (2, 1);
      \draw[densely dotted,->] (1, 1) -- (1, 2);
      \node at (0.5,0) {$Q^*$};
    \end{tikzpicture}
    \caption{Illustration to Example~\ref{ex:r2}.}
    \label{fig:spin5}
  \end{figure}
\end{example}

\section{Polytopes with simplicial facets}
\label{sec:polyt-with-simpl}

The purpose of this section is to prove an auxiliary
result on polytopes (Proposition~\ref{prop:approx-simpl}),
which will be used in the proof of Theorem~\ref{theorem:le-2d}.

Let $V \cong \Q^n$ be a vector space of dimension $n$ and $Q \subseteq
V$ a full-dimensional polytope with $0 \in \topint(Q)$.

\begin{prop}
  \label{prop:approx-simpl}
  There exists a simplicial polytope $Q_s \subseteq V$ containing $Q$
  such that the simplicial facets of $Q$ are facets of $Q_s$.
\end{prop}

We denote the facets of $Q$ by $F_1, \ldots, F_r$. For every facet
$F_i$ we choose a hyperplane $H_i$ with normal vector $n_i \in V^*$,
i.\,e.~$H_i = \{ v \in V : \langle n_i, v \rangle = 1 \}$ such that
$F_i = Q \cap H_i$. Each hyperplane determines two open half-spaces
\begin{align*}
  H_i^- &\coloneqq \{ v \in V : \langle n_i, v \rangle < 1 \} \text{ and}\\
  H_i^+ &\coloneqq \{ v \in V : \langle n_i, v \rangle > 1 \}\text{,}
\end{align*}
such that $H_1^- \cap \ldots \cap H_r^- = \topint(Q)$.

\begin{definition}
  We say that $v \in V$ is \emph{beneath} (resp. is \emph{beyond})
  $F_i$ if $v$ belongs to $H_i^-$ (resp. to $H_i^+$).
\end{definition}

We will use the following result.

\begin{theorem}[{\cite[Theorem 5.2.1]{Gruenbaum:ConvexPolytopes}}]
  \label{thm:facial-structure}
  Let $v \in V$ such that $v$ is a vertex of $Q' \coloneqq \conv( \{ v
  \} \cup Q )$. Then
  \begin{enumerate}
  \item a face $F$ of $Q$ is a face of $Q'$ if and only if there
    exists a facet $E \preceq Q$ such that $F \subseteq E$ and $v$ is
    beneath $E$,
  \item if $F$ is a face of $Q$, then $F' \coloneqq \conv( \{ v \}
    \cup F )$ is a face of $Q'$ if and only if
    \begin{enumerate}
    \item either $v$ is contained in the affine span of $F$,
    \item or among the facets of $Q$ containing $F$ there is at least
      one such that $v$ is beneath it and at least one such that $v$
      is beyond it.
    \end{enumerate}
  \end{enumerate}
  Moreover, each face of $Q'$ is of one and only one of those types.
\end{theorem}

The following definitions are taken from \cite[Chapter~III, Sections 1
and 2]{Ewald:cc}.

\begin{definition}
  Let $\Ff$ be a fan in $V$ and $\sigma \in \Ff$. Then we set
  \begin{align*}
    \st( \sigma, \Ff ) & \coloneqq \{ \sigma' \in \Ff : \sigma
    \subseteq \sigma' \} && \text{(the \emph{star} of $\sigma$ in
      $\Ff$),}\\
    \clst( \sigma, \Ff ) & \coloneqq \{ \sigma'' \in \Ff : \sigma''
    \subseteq \sigma' \in \st( \sigma, \Ff ) \} && \text{(the
      \emph{closed star} of $\sigma$ in $\Ff$).}
  \end{align*}
\end{definition}

\begin{definition}
  Let $\sigma \subseteq V$ be a cone and $v \in V$ not contained in
  $\sigma$. Then we call $v \cdot \sigma \coloneqq \cone( \{ v \} \cup
  \sigma )$ the \emph{join} of $v$ and $\sigma$.
\end{definition}

\begin{definition}
  Let $\Ff$ be a fan in $V$ and $v \in V$. Assume that $v \cdot
  \sigma$ is defined for every $\sigma \in \Ff$ and that $\relint( v
  \cdot \sigma ) \cap \relint( v \cdot \sigma' ) = \emptyset$ whenever
  $\sigma, \sigma' \in \Ff$ are distinct. Then the fan $v \cdot \Ff
  \coloneqq \{ v \cdot \sigma : \sigma \in \Ff\}$ is called the
  \emph{join} of $v$ and $\Ff$.
\end{definition}

\begin{definition}
  Let $\Ff$ be a fan in $V$ and $v \in V$ a point such that there
  exists an (automatically uniquely determined) cone $\sigma \in \Ff$
  with $v \in \relint(\sigma)$.  Then we call the transition
  \begin{align*}
    \Ff \mapsto v \star \Ff \coloneqq ( \Ff \setminus \st( \sigma, \Ff
    ) ) \cup v \cdot ( \clst( \sigma, \Ff ) \setminus \st( \sigma, \Ff
    ) )
  \end{align*}
  the \emph{stellar subdivision} of $\Ff$ in direction of $v$.
\end{definition}

For a full-dimensional polytope $Q' \subseteq V$ with $0 \in
\topint(Q')$ we denote by $\Ff(Q')$ its face fan in $V$.

\begin{lemma}
  \label{lem:stellar-subdiv-on-polytope-side}
  Let $F \preceq Q$ be a non-simplicial face and $v \in \relint (F)$.
  Then there exists a polytope $Q' \subseteq V$ containing $Q$ such
  that
  \begin{align*}
    \Ff(Q') = v \star \Ff(Q)
  \end{align*}
  and the simplicial facets of $Q$ are facets of $Q'$.
\end{lemma}

\begin{proof}
  Let $F_{s_1}, \ldots, F_{s_{k}}$ be the facets not containing
  $F$. Choose $t > 1$ such that $\langle tv, n_{s_j} \rangle < 1$ for
  $j = 1, \ldots, k$ and set $v' \coloneqq tv$.  Then $v'$ is beneath
  the facets not containing $F$ and beyond the facets containing $F$.
  We set $Q' \coloneqq \conv( \{ v' \} \cup Q )$. As simplicial facets
  of $Q$ do not contain $F$, it follows from Theorem
  \ref{thm:facial-structure}(1) that the simplicial facets of $Q$ are
  facets of $Q'$.

  We now verify that $\Ff( Q' ) = v \star \Ff(Q)$.  It suffices to
  check that the sets of maximal cones coincide.

  By Theorem \ref{thm:facial-structure}(1), a facet $F_i \preceq Q$
  does not contain $F$ if and only if $F_i$ is a facet of
  $Q'$. Furthermore, the facets of $Q$ not containing $F$ are in
  correspondence with the maximal cones in $\Ff( Q ) \setminus \st(
  \Q_{\ge 0} F, \Ff( Q ) )$.

  Let $F'$ be a facet of $Q'$ which is not a facet of $Q$. By Theorem
  \ref{thm:facial-structure}(2), we have $F' = \conv( \{ v' \} \cup
  F'' )$ where $F'' \preceq Q$ is a face of codimension $2$ such that
  among the facets of $Q$ containing $F''$ there is at least one
  beneath and at least one beyond $v'$. Such faces $F''$ are in
  correspondence with the maximal cones in $\clst( \Q_{\ge 0} F, \Ff(
  Q ) ) \setminus \st( \Q_{\ge 0} F, \Ff( Q ) )$. The result follows
  from the equality $\Q_{\ge 0} F' = v \cdot \Q_{\ge 0} F''$.
\end{proof}

\begin{proof}[Proof of Proposition~\ref{prop:approx-simpl}]
  We can transform the fan $\Ff( Q )$ into a simplicial one by
  successively applying stellar subdivision to non-simplicial cones.
  By Lemma~\ref{lem:stellar-subdiv-on-polytope-side}, we also obtain a
  corresponding polytope.
\end{proof}

\section{Proof of Theorem~\ref{theorem:le-2d}: the inequality $\rho_X \le 2d$}
\label{sec:proof-ineq-theor}

Let $G/H \hookrightarrow X$ be a Gorenstein
spherical Fano embedding with
associated $G/H$-reflexive polytope $Q$.
The condition for $\Q$-factoriality from
Proposition~\ref{prop:sphfqf} can be
straightforwardly translated
into the setting of $G/H$-reflexive polytopes as follows:

\begin{prop}
$X$ is $\Q$-factorial if and only if every facet $\widehat{v}$ of $Q$
for $v \in V_{\Supp}(Q^*)$ has exactly $\rank X$ vertices in $V(Q)$,
where such a vertex can not be equal to $\rho(D)$ for more than one
$D \in \Dm$.
\end{prop}

Now assume that $X$ is $\Q$-factorial, of rank $r$, and of dimension $d$.
The proof of the
inequality $\rho_X \le 2d$ appearing
here is an extended version of the proof of the horospherical case
in \cite{Pasquier:FanoHorospherical}.
Note that, in contrast to the horospherical case, not all facets of
the polytope $Q$ are necessarily simplicial (only the facets dual to the
supported vertices of $Q^*$ are).

\begin{lemma}
  \label{lem:adjacency}
  Let $v \in V_{\Supp}( Q^* )$ and $u \in V( Q ) \cap \Nm$. If
  $\langle u,v \rangle = 0$, then there is a facet $F \preceq Q$
  containing $u$ and intersecting $\widehat{v}$ in a face of
  codimension $2$ of $Q$, \ie $u$ is adjacent to $\widehat{v}$.
\end{lemma}
\begin{proof}
  Let $e_1, \ldots, e_r$ be the vertices of the facet $\widehat{v}
  \preceq Q$. By \cite[Theorem 3.1.6]{Gruenbaum:ConvexPolytopes}, for
  all $j = 1, \ldots, r$ there is exactly one facet $F_j \preceq Q$
  containing $e_1, \ldots, e_{j-1}, e_{j+1}, \ldots, e_r$ being
  distinct from $\widehat{v}$. Let $u_j$ be a vertex of $F_j$ not
  contained in $\widehat{v} \cap F_j$. Then $(e_1, \ldots, e_{j-1},
  u_j, e_{j+1}, \ldots, e_r)$ is a basis of $\Nm_\Q$.

  Let $( e_1^*, \ldots, e_r^* )$ be the basis of $\Mm_\Q$ dual to $(
  e_1, \ldots,e_r )$. Let $j = 1, \ldots, r$.  Then $\langle e_j^*,
  u_j \rangle \neq 0$ (otherwise $u_j$ would be contained in the
  hyperplane spanned by $\{e_i\}_{ i\neq j}$). We define
  \begin{align*}
    \lambda_j \coloneqq \frac{-1 - \langle u_j, v \rangle }{ \langle
      u_j, e_j^* \rangle } \text{ and } v_j \coloneqq v + \lambda_j
    e_j^*\text{.}
  \end{align*}
  Then we have $\langle e_i, v_j \rangle = -1$ for $i \ne j$ and
  $\langle u_j, v_j \rangle = -1$. It follows that $v_j$ is a (not
  necessarily supported) vertex of $Q^*$ and $F_j = \widehat{v_j}$.
  We have $\lambda_j > 0$ as $-1 < \langle e_j, v_j \rangle$.  Then
  $\langle u, v_j \rangle = \lambda_j \langle u, e_j^* \rangle$ and
  hence
  \begin{align*}
    u \not\in F_j \text{ if and only if } \langle u, e_j^* \rangle \ge
    0.
  \end{align*}
  If $u \not \in F_j$ for all $j = 1, \ldots, r$, then $\langle u,
  e_j^* \rangle \ge 0$ for all $j = 1, \ldots, r$ and therefore $ u
  \in \cone ( e_1, \ldots, e_r )$, which implies $u = e_j \in F_j$ for
  some $j = 1, \ldots, r$, a contradiction. We obtain $u \in F_j$ for
  some $j = 1, \ldots, r$.
\end{proof}

\begin{cor}
  \label{sec:estimate_adjacent_lattice_points}
  Let $v \in V_{\Supp}(Q^*)$. Then we have $| V( \widehat{v} ) | = r$
  and
  \begin{align*}
    | \{ u \in V(Q) \cap \Vm \cap \Nm : \langle u, v \rangle = 0 \} |
    \le r.
  \end{align*}
\end{cor}
\begin{proof}
  As $\widehat{v}$ is a simplex, it has $r$ vertices, which we 
  denote by $u_1, \ldots, u_n$, and the first assertion follows.  For
  the second assertion, by Proposition~\ref{prop:approx-simpl},
  we may replace the polytope $Q$ by a
  simplicial polytope $Q'$ such that
  \begin{align*}
    V(Q) \cap \Vm \cap \Nm = V(Q') \cap \Vm \cap \Nm \text{ and
    }V_{\Supp}(Q^*) \subseteq V((Q')^*)\text{.}
  \end{align*}
  Let $u \in V( Q ) \cap \Vm \cap \Nm$ with
  $\langle u, v \rangle = 0$. By Lemma \ref{lem:adjacency}, there
  exists a facet $F_u \preceq Q$ adjacent to $\widehat{v}$ and
  containing $u$. As $Q$ is assumed simplicial, $F_u$ has vertices $u_1,
  \ldots, u_{i-1}, u, u_{i+1}, \ldots, u_r$ for some $i = 1, \ldots,
  r$. Since there are $r$ facets adjacent to $\widehat{v}$, the result
  follows.
\end{proof}

\begin{lemma}
  \label{lem:origin_in_interior_supp_verts_sph_roots}
  Let $Q \subseteq \Nm_\Q$ be a $G/H$-reflexive polytope. Then $0$ is
  contained in the interior of the convex hull of the supported
  vertices of $Q^*$ and $-\Sigma$.
\end{lemma}
\begin{proof}
  We set $\sigma \coloneqq \cone( V_{\Supp}( Q^* ) \cup ( -\Sigma ) )$
  and show $\sigma^\vee = 0$. Then $\sigma = ( \sigma^\vee )^\vee =
  0^\vee = \Mm_\Q$, and hence $0 \in \topint( \conv( V_{\Supp}( Q^* )
  \cup ( -\Sigma ) ) )$.

  Assume $0 \neq u \in \sigma^\vee$. Since $\langle u, -\Sigma \rangle
  \ge 0$, we have $u \in \Vm$. Then there is a supported facet $F
  \preceq Q$ and a rational number $t > 0$ such that $tu \in F$. Let
  $v$ be the supported vertex of $Q^*$ such that $\widehat{v} =
  F$. Then we have $0 \le \langle tu, v \rangle = -1$, a
  contradiction.
\end{proof}

It follows from
Lemma~\ref{lem:origin_in_interior_supp_verts_sph_roots} that there
exist positive natural numbers $m_v$ and $l_\gamma$ such that
\begin{align*}
  0 = \sum_{v \in V_{\Supp}(Q^*)}m_v v - \sum_{\gamma \in
    \Sigma}l_\gamma \gamma\text{.}
\end{align*}
We define $M \coloneqq \sum_{v \in V_{\Supp}(Q^*)}m_v$.

\begin{prop}
  \label{prop:bound-vert-of-q}
  We have
  \begin{align*}
    |V(Q) \cap \Vm \cap \Nm| \le 3r + \sum_{\gamma \in \Sigma} \sum_{u
      \in V(Q) \cap \Vm \cap \Nm} \frac{l_\gamma \langle u, \gamma
      \rangle}{M}\text{.}
  \end{align*}
\end{prop}
\begin{proof}
  For $u \in V( Q )\cap \Vm \cap \Nm$ we define
  \begin{align*}
    A( u ) \coloneqq \{ v \in V_{\Supp}(Q^*) : \langle u, v \rangle = -1 \}
    \text{ and } B( u ) \coloneqq \{ v\in V_{\Supp}(Q^*) : \langle u, v\rangle
    = 0\}.
  \end{align*}
  Then we have
  \begin{align*}
    0 & = \sum_{v\in V_{\Supp}(Q^*)} m_v \langle u,v \rangle -
    \sum_{\gamma \in
      \Sigma} l_\gamma \langle u,\gamma \rangle\\
    & \ge - \sum_{ v \in A( u ) } m_v + \sum_{ v \notin A( u ) \cup B(
      u ) } m_v - \sum_{\gamma \in \Sigma} l_\gamma \langle \gamma,
    u \rangle\\
    & = M - 2 \sum_{ v \in A( u ) } m_v - \sum_{ v \in B( u ) } m_v -
    \sum_{\gamma \in \Sigma} l_\gamma \langle u,\gamma \rangle
  \end{align*}
  and hence $M \le 2 \sum_{ v \in A( u ) } m_v + \sum_{ v \in B( u ) }
  m_v + \sum_{\gamma \in \Sigma} l_\gamma \langle u,\gamma \rangle$.
  Summing up this inequality over all $u \in V( Q ) \cap \Vm \cap
  \Nm$, we obtain
  \begin{align*}
    &|V(Q) \cap \Vm \cap \Nm| M \le \sum_u \sum_{ v \in A( u ) } 2 m_v
    + \sum_u \sum_{ v \in B( u ) } m_v + \sum_u \sum_{\gamma \in
      \Sigma} l_\gamma \langle
    u,\gamma \rangle\\
    = &\sum_{v\in V_{\Supp}(Q^*)} \sum_{ \langle u,v \rangle = -1 } 2
    m_v + \sum_{v\in V_{\Supp}(Q^*)} \sum_{ \langle u,v \rangle = 0 }
    m_v + \sum_{\gamma \in \Sigma} \sum_{u \in V(Q) \cap \Vm \cap \Nm}
    l_\gamma \langle u,\gamma \rangle.
  \end{align*}
  For a fixed $v$ the number of vertices $u$ of $V$ with $\langle u,v
  \rangle = -1$ is equal to the number of vertices of the facet
  $\widehat{v}$, which is exactly $r$.  By
  Corollary~\ref{sec:estimate_adjacent_lattice_points}, the number of
  $u \in V( Q ) \cap \Vm \cap \Nm$ with $\langle u,v \rangle = 0$ is
  less than or equal to $r$. Hence the result follows.
\end{proof}

\begin{prop}
  \label{prop:thineq}
  We have $\rho_X \le 2r + |\Dm| \le 2r + 2|S \setminus S^p| \le 2d$.
\end{prop}
\begin{proof}
  As $\langle u, \gamma \rangle \le 0$ for every $\gamma \in \Sigma$,
  $u \in \Vm$, it follows from
  Proposition~\ref{prop:bound-vert-of-q} that $| V(Q) \cap \Vm \cap
  \Nm | \le 3r$. Let $r'$ be the number of $G$-invariant prime
  divisors in $X$. It follows from \cite[Proposition~4.1.1]{brcox}
  that
  \begin{align*}
    \rho_X = r' + |\Dm| - r \le |V(Q) \cap \Vm \cap \Nm| + |\Dm| - r
    \le 2r + |\Dm|\text{.}
  \end{align*}
  By \cite[Theorem~30.22]{ti}, we obtain $|\Dm| \le 2|S\setminus
  S^p|$, and by \cite[Corollary~15.18]{ti}, we obtain $r +|S \setminus
  S^p| \le d$.
\end{proof}

\section{Proof of Theorem~\ref{theorem:le-2d}: the extreme case $\rho_X = 2d$}
\label{sec:proof-equal-theor}

Let $G/H \hookrightarrow X$ be a $\Q$-factorial Gorenstein
spherical Fano embedding of rank $r$ and of dimension $d$ with $\rho_X = 2d$.

\begin{prop}
  \label{prop:ssgp}
  We have $ |S \setminus S^p| = | R^+ \setminus R^+_{S^p}|$ where
  $R_{S^p}$ denotes the root system generated by $S^p$.
\end{prop}
\begin{proof}
  According to Proposition~\ref{prop:thineq}, we have $d = r + |S
  \setminus S^p|$.  On the other hand, we have $d = r + \dim G/P = r +
  | R^+ \setminus R^+_{S^p}|$.
\end{proof}

\begin{prop}
  \label{prop:ss-is-product}
  The root system $R$ is of type $A_1^k \times R_{S^p}$ for some $k
  \in \Z_{\ge 0}$.
\end{prop}
\begin{proof}
  As $S \subseteq R^+$ and $S^p \subseteq R^+_{S^p}$, it follows from
  Proposition~\ref{prop:ssgp} that $R^+ \setminus R^+_{S^p} = S
  \setminus S^p$, which means that every positive root not contained
  in $R^+_{S^p}$ is simple. Let $\alpha \in S \setminus S^p$. Then the
  irreducible factor of $R$ which contains $\alpha$ must be of type
  $A_1$ as it can not contain any non-simple positive root.
\end{proof}

As in Remark~\ref{rem:gssc}, we may assume
$G = G^{ss} \times C$ where $G^{ss}$ is semisimple simply-connected
and $C$ is a torus. According to Proposition~\ref{prop:ss-is-product},
we have $G^{ss} \cong \SL_2^k \times G'$ where $G'$ is the factor
corresponding to $R_{S^p}$.

\begin{cor}
  We may assume $G = \SL_2^k \times C$.
\end{cor}
\begin{proof}
  Let $P = L \ltimes P_u$ be the Levi decomposition with $T \subseteq
  L$. According to \cite[Theorem~4.7]{ti}, $[L, L]$ acts trivially on
  the open $B$-orbit in $X$. The result follows from the observation
  $G'= [L,L]$.
\end{proof}

\begin{cor}
  We have $V(Q) \cap \Vm \subseteq \Nm$ consists of all primitive ray
  generators where the rays correspond to $G$-invariant prime
  divisors.
\end{cor}

\begin{prop}
  \label{prop:no-sph-roots}
  We have $\Sigma = \emptyset$.
\end{prop}
\begin{proof}
  According to the proof of Proposition~\ref{prop:thineq}, we must
  have $| V(Q) \cap \Vm \cap \Nm | = 3r$. Taking into account
  Proposition~\ref{prop:bound-vert-of-q}, this implies
  \begin{align*}
    \sum_{\gamma \in \Sigma} \sum_{u \in V(Q) \cap \Vm \cap \Nm}
    \frac{l_\gamma \langle u, \gamma \rangle}{M} = 0\text{.}
  \end{align*}

  By Proposition \ref{prop:thineq} and \ref{prop:ss-is-product}, we
  obtain $|\Dm| = 2|S\setminus S^p| = 2k$ and thus all colors are of
  type $a$, $\Sigma \subset S$ and $\Dm(\alpha) \cap \Dm(\beta) =
  \emptyset$ for two distinct $\alpha,\beta \in S$. In particular,
  $\langle \rho(D), \alpha \rangle \le 1$ for all $D \in \Dm$ and all
  $\alpha \in \Sigma$ with equality if and only if $D \in
  \Dm(\alpha)$. As $\rho(D') + \rho(D'') = \alpha^\vee$ for
  $\Dm(\alpha)=\{ D', D'' \}$, we obtain $\langle \rho(D), \alpha
  \rangle = 0$ for all $D \not\in \Dm(\alpha)$. Since $X$ is complete,
  for every $\alpha \in \Sigma$ there exists a primitive ray generator $v
  \in \Vm$ such that $\langle v, \alpha \rangle < 0$.  As $l_\alpha >
  0$, we obtain $\Sigma = \emptyset$.
\end{proof}

By Proposition \ref{prop:no-sph-roots}, $X$ is horospherical, and therefore
the last part of Theorem~\ref{theorem:le-2d} follows from
\cite[Th\'{e}or\`{e}me 1.2]{Pasquier:FanoHorospherical}.

\addtocontents{toc}{\SkipTocEntry}
\section*{Acknowledgments}
We would like to thank our teacher Victor Batyrev for encouragement
and highly useful advice, as well as J\"{u}rgen Hausen for several
useful discussions. We are also grateful to Dmitry Timashev for
elaborating on Section~30.4 of his book. Finally, we thank
the referee for several helpful remarks and comments.

\bibliographystyle{amsalpha} \bibliography{gsfv}

\end{document}